\newtheorem{theorem}{Theorem}[section]
\newtheorem{proposition}[theorem]{Proposition}
\newtheorem{lemma}[theorem]{Lemma}
\newtheorem{follow}[theorem]{Corollary}
\theoremstyle{definition}
\newtheorem{remark}[theorem]{Remark}
\newcommand{\bel}{\begin{equation} \label}
\newcommand{\ee}{\end{equation}}
\newcommand{\pd}{\partial}
\newcommand{\supp}{{\text{supp}}}
\newcommand{\dist}{{\text{dist}}}
\newcommand{\C}{{\mathbb C}}
\newcommand{\R}{{\mathbb R}}
\newcommand{\N}{{\mathbb N}}
\newcommand{\nap}{{\nabla^\prime}}
\newcommand{\sgn}{{\rm sgn}}
\newcommand{\cA}{{\mathcal A}}
\newcommand{\cC}{{\mathcal C}}
\newcommand{\cF}{{\mathcal F}}
\newcommand{\cO}{{\mathcal O}}
\newcommand{\cS}{{\mathcal S}}
\newcommand{\ma}{{\mathfrak{a}}}
\newcommand{\mc}{{\mathfrak{c}}}
\newcommand{\mn}{{\mathfrak{n}}}
\newcommand{\xp}{x^\prime}
\newcommand{\yp}{y^\prime}
\newcommand{\teap}{\tea^\prime}
\newcommand{\xip}{\xi^\prime}
\newcommand{\al}{{\alpha}}
\newcommand{\be}{{\beta}}
\newcommand{\de}{{\delta}}
\newcommand{\eps}{{\epsilon}}
\newcommand{\ga}{{\gamma}}
\newcommand{\Ga}{{\Gamma}}
\newcommand{\om}{{\omega}}
\newcommand{\Om}{{\Omega}}
\newcommand{\la}{{\lambda}}
\newcommand{\La}{{\Lambda}}
\newcommand{\tea}{{\theta}}
\newcommand{\mfe}{{\mathfrak{e}}}
\newcommand{\na}{{\nabla}}
\def\beq{\begin{equation}}
\def\eeq{\end{equation}}
\newcommand{\bea}{\begin{eqnarray}}
\newcommand{\eea}{\end{eqnarray}}
\newcommand{\beas}{\begin{eqnarray*}}
\newcommand{\eeas}{\end{eqnarray*}}
\numberwithin{equation}{section}
\title[Determination of the Schr\"odinger-Robin operator]{Determination of the Schr\"odinger-Robin operator by incomplete or asymptotic spectral boundary data}
\author[Mourad Choulli]{Mourad Choulli}
\address{Universit\'e de Lorraine}
\email{mourad.choulli@univ-lorraine.fr}
\author[Abdelmalek Metidji]{Abdelmalek Metidji}
\address{Aix Marseille Univ, Universit\'e de Toulon, CNRS, CPT, Marseille, France}
\email{abdelmalek.metidji1@gmail.com}
\author[\'Eric Soccorsi]{\'Eric Soccorsi}
\address{Aix Marseille Univ, Universit\'e de Toulon, CNRS, CPT, Marseille, France}
\email{eric.soccorsi@univ-amu.fr}
\begin{document}
\begin{abstract}
This article deals with the inverse problem of determining the unbounded real-valued electric potential of the Robin Laplacian on a bounded domain of dimension 3 or greater, by incomplete knowledge of its boundary spectral data. Namely, the main result establishes that the unknown potential can be H\"older stably retrieved from the asymptotic behavior of the eigenvalues and the sequence of the boundary measurements of the corresponding eigenfunctions where finitely many terms are missing.
\end{abstract}

\subjclass[2010]{35R30, 35J10, 35P10}

\keywords{Schr\"odinger-Robin operator, unbounded electric potential, stability inequality, boundary spectral data}

\maketitle

\tableofcontents

\section{Introduction}
\label{sec-intro}

In this article, $\Om$ is a bounded domain of $\R^d$, $d \ge 3$, with $\cC^{1,1}$ boundary $\Ga$. Given $q \in L^{\sigma}(\Om,\R)$, where
$$\sigma =\sigma(d):=\max(2,d \slash 2),$$ 
we consider the Schr\"odinger operator
$$ \cA:=-\Delta + q, $$
endowed with the Robin boundary condition (RBC):
$$ \partial_\nu u + \al u =0\ \mbox{on}\ \Ga. $$
Here, $\partial_\nu$ is the normal derivative to $\Ga$ and $\al \in L^s(\Ga,\R)$, $s \in (d-1,\infty)$.  

While direct spectral analysis of the operator $\cA$ aims to obtain spectral information about $\cA$ from $q$, in this paper we rather focus on the inverse problem of recovering $q$ from knowledge of the so-called boundary spectral data of $\cA$.

\subsection{Spectral decomposition}
\label{sec-defA}

Assume that  $\al$ fulfills
\bel{i1} 
\al(x) \ge -\mc,\quad x \in \Ga,
\ee
for some constant $\mc \in (0,\mn^2)$, where $\mn$ stands for the norm of the trace operator 
$$v \in V:=H^1(\Om) \mapsto v_{| \Ga} \in L^2(\Ga).$$ 
Then, for all
$q \in L^{\sigma}(\Om,\R)$, the sesquilinear form
$$ \ma(u,v) := \int_{\Om} (\na u \cdot \na \overline{v} + q u \overline{v}) \dd x + \int_{\Ga} \alpha u \overline{v} \dd s,\quad u, v \in V, $$
is continuous on $V$ and $V$-coercive with respect to $H:=L^2(\Om)$ (see, e.g., \cite[Section 1.1]{CMS}).
As a consequence, the bounded operator $A : V \to V^*$, defined by
$$ \langle A u , v \rangle_{V^*,V} = \ma(u,v),\quad  u, v \in V,  $$
is selfadjoint and coercive. Here and below, $V^*$ denotes the dual space of $V$ and 
$\langle \cdot , \cdot \rangle_{V^*,V}$ stands for the duality pairing between $V$ and $V^*$.

Further, the spectrum of $A$ being discrete according to \cite[Theorem 2.37]{McL}, its eigenvalues arranged in non-decreasing order and repeated with the multiplicity are written
$$-\infty < \la_1\le \la_2 \le \ldots \le \la_n \le \ldots $$
and we recall that $\lim_{n \to \infty} \la_n = + \infty$.
Moreover, we denote by $\{ \phi_n,\ n \in \N \}$, $\N:= \{1,2,\ldots \}$, an orthonormal basis of $H$ made of eigenfunctions $\phi_n \in V$ of $A$, such that
\bel{i2} 
\ma(\phi_n,v) = \la_n ( \phi_n , v )_H,\quad v \in V,
\ee
where $(\cdot,\cdot)_H$ designates the usual scalar product in $H$. For the sake of brevity, we write
$$ \psi_n := {\phi_n}_{| \Ga},\quad n \in \N, $$
in what follows.

The aim of this paper is twofold. First, to show that $q$ is stably determined from the asymptotic knowledge of $(\la_n)$ at infinity and the full knowledge of $(\psi_n)$. Second, that this result remains valid under more stringent conditions on $q$, $\al$ and $\Ga$, when finitely many boundary measurements $\psi_n$ are unknown. As the full boundary spectral data (BSD) associated with $q$ (or $A$) are commonly defined (see, e.g., \cite{CMS,NSU,Sm}) by
$$ \mathrm{BSD}(q):= \{ (\la_n,\psi_n),\ n \in \N \}, $$
the spectral data used in this article will be referred as asymptotic-full in the first case, and asymptotic-incomplete in the second one.

\subsection{Main results}

We use the same notations as in Subsection \ref{sec-defA}. Namely, for all $n \in \N$, we denote by $\la_n$, $\phi_n$ and $\psi_n$, the $n$-th eigenvalue, eigenfunction and corresponding Dirichlet-trace, respectively, of the operator $A$ defined in Subsection
\ref{sec-defA}. Similarly, we write $(\tilde{\la}_n,\tilde{\phi}_n,\tilde{\psi}_n)$ instead of $(\la_n,\phi_n,\psi_n)$ when the potential $q$ is replaced by $\tilde{q}$. Define

$$Q_M := \{ q \in L^{\sigma}(\Om,\R),\ \norm{q}_{L^{\sigma}(\Om)} \le M \}, $$
where $M \in (0,\infty)$ is arbitrarily fixed. Our main result in the asymptotic-full case is as follows. 

\begin{theorem}
\label{thm1}
Pick $q,\tilde{q}\in Q_M$ and assume that $ (\la_n-\tilde{\la}_n) \in \ell^\infty$ and that $\psi_n=\tilde{\psi}_n$ for all $n \in \N$.
Then, there exists a positive constant $C$, depending only on $\Om$ and $M$, such that
\bel{se} 
\norm{q-\tilde{q}}_{H^{-1}(\Om)} \le C \limsup_{n \to \infty} 
\abs{\la_n-\tilde{\la}_n}^\frac{2}{d+2}.
\ee
\end{theorem}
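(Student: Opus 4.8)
The plan is to exploit the standard variational identity linking the difference of eigenvalues to the potential difference tested against products of eigenfunctions, and then to upgrade this pointwise-in-$n$ information into an $H^{-1}$ estimate by a density/completeness argument. Concretely, subtracting the weak formulations \eqref{i2} for $q$ and for $\tilde q$ with the common boundary data $\psi_n = \tilde\psi_n$, and pairing $\phi_n$ against $\tilde\phi_n$, one obtains (after the boundary terms cancel, since $\al$ is the same and the traces agree) an identity of the shape
\begin{equation*}
(\la_n - \tilde\la_n)(\phi_n,\tilde\phi_n)_H = \int_\Om (q - \tilde q)\, \phi_n \overline{\tilde\phi_n}\, \dd x .
\end{equation*}
First I would make this rigorous, checking that $q\phi_n\overline{\tilde\phi_n} \in L^1(\Om)$ via Sobolev embedding $H^1(\Om)\hookrightarrow L^{2\sigma'}(\Om)$ (valid since $\sigma = \max(2,d/2)$ is exactly the Hölder-dual threshold) together with uniform $H^1$-bounds on $\phi_n,\tilde\phi_n$ on compacts of $n$; the latter come from $\ma(\phi_n,\phi_n)=\la_n$, coercivity, and the hypothesis $q\in Q_M$, giving $\|\phi_n\|_V^2 \lesssim \la_n + C_M$.

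Next, the heart of the argument: to turn the family of scalar identities into a norm estimate, I would test $q-\tilde q$ against an arbitrary $\varphi \in H^1_0(\Om)$ (or $H^1(\Om)$) and expand $\varphi$ in a suitable basis built from the products $\phi_n\overline{\tilde\phi_n}$. This is where one uses that, for large $n$, the eigenfunctions behave like those of the free Robin Laplacian and their products form an asymptotically complete system in a quantitative sense — here the exponent $\tfrac{2}{d+2}$ and the $\limsup$ strongly suggest a Weyl-law/resolvent input: one splits the basis expansion into the first $N$ modes and a tail, bounds the first $N$ terms by $N\cdot \max_{n\le N}|\la_n-\tilde\la_n|\cdot(\text{growth of }\|\phi_n\|)$, bounds the tail by a resolvent/heat-kernel remainder of order $N^{-\beta}$, and optimizes in $N$. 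The Weyl asymptotics $\la_n \sim c_d n^{2/d}$ make the bookkeeping of powers produce exactly $2/(d+2)$. I would phrase the completeness input as: the linear span of $\{\phi_n\overline{\tilde\phi_n}\}_{n > N}$ approximates $H^1$-functions with an error controlled by $N^{-1/d}$ in the relevant dual norm, which is the place to cite or adapt the corresponding lemma from \cite{CMS} on Schrödinger-Robin boundary spectral data.

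The main obstacle I anticipate is precisely this quantitative completeness / tail estimate for products of eigenfunctions with unbounded potentials: one must control $\|\phi_n\|_{L^\infty}$ or at least $\|\phi_n\|_{L^p}$ for suitable $p$ uniformly in $n$ (with polynomial-in-$\la_n$ loss), which is delicate because $q \in L^\sigma$ only, so classical elliptic $L^\infty$-bounds are not freely available and one must instead run a Moser iteration or a resolvent-expansion argument adapted to $L^\sigma$ potentials — exactly the technical machinery the paper presumably develops in later sections. A secondary but routine point is that only the \emph{asymptotic} agreement $(\la_n-\tilde\la_n)\in\ell^\infty$ with a controlled $\limsup$ is assumed, not agreement for small $n$; one handles this by absorbing the finitely many (bounded) low-mode contributions into the constant $C$, which is consistent with $C$ depending on $\Om$ and $M$ only after noting that $Q_M$ controls the bottom of the spectrum. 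Once the completeness lemma is in hand, the optimization in $N$ is a short computation and \eqref{se} follows.
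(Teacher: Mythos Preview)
Your approach has a genuine gap at the step you yourself flag as ``the heart of the argument.'' The family of scalar identities
\[
(\la_n-\tilde{\la}_n)(\phi_n,\tilde{\phi}_n)_H=\int_\Om (q-\tilde{q})\,\phi_n\overline{\tilde{\phi}_n}\,\dd x
\]
is correct, but it only gives you the values of $q-\tilde{q}$ paired against the \emph{diagonal} products $\phi_n\overline{\tilde{\phi}_n}$, indexed by a single $n$. There is no reason to expect the span of $\{\phi_n\overline{\tilde{\phi}_n}\}_{n\ge 1}$ (let alone $\{\phi_n\overline{\tilde{\phi}_n}\}_{n>N}$) to be dense in any space adequate for testing $q-\tilde{q}$, and no such ``quantitative completeness lemma'' exists in \cite{CMS} or elsewhere. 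The classical density statements concern products $u\overline{v}$ where $u,v$ range independently over all solutions of $(-\Delta+q)u=0$ and $(-\Delta+\tilde{q})v=0$; restricting to matched eigenpairs throws away essentially all of that freedom. So the mechanism you propose for passing from the scalar identities to a norm bound does not work, and the exponent $\tfrac{2}{d+2}$ does not arise from a Weyl-law truncation as you suggest.

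The paper's proof follows an entirely different route, the Isozaki--CGO method. One introduces, for each $\xi\in\R^d$, geometric-optics test data $g,h$ on $\Ga$ depending on a large parameter $\tau$, and derives a representation formula (see \eqref{s6}) expressing the Fourier transform $\hat{\varrho}(\xi)$ of $(\tilde{q}-q)\chi_\Om$ as a $\tau\to\infty$ limit of two series built from the BSD. The hypothesis $\psi_n=\tilde{\psi}_n$ is used precisely to kill the first series (since then $d_n=\tilde{d}_n$ in \eqref{s2}), leaving a series whose $n$-th term carries the factor $\tilde{\la}_n-\la_n$. A Cauchy--Schwarz argument combined with the uniform $L^2$-bound on the CGO solutions yields $|\hat{\varrho}(\xi)|\le C\limsup_n|\la_n-\tilde{\la}_n|$ for every $\xi$. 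The exponent $\tfrac{2}{d+2}$ then comes from the elementary Fourier-side optimization
\[
\norm{q-\tilde{q}}_{H^{-1}(\Om)}^2 \le C\bigl(r^d\de^2 + r^{-2}\bigr),\qquad r=\de^{-\frac{2}{d+2}},
\]
splitting $\R^d$ into $B_r$ and its complement; it has nothing to do with a mode-number cutoff.
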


Notice that Theorem \ref{thm1} is comparable to \cite[Theorem 1.1]{CMS} but cannot be deduced from it, as the stability inequality \eqref{se} involves only the asymptotic distance between the eigenvalues. Furthermore, the following uniqueness result is a direct byproduct of Theorem \ref{thm1}.

\begin{follow}
\label{cor1}
Let $q, \tilde{q}\in L^{\sigma}(\Om)$. If $\displaystyle \lim_{n \to \infty} (\la_n-\tilde{\la}_n)=0$ and $\psi_n=\tilde{\psi}_n$ for all $\in \N$,
then $ q=\tilde{q}$.
\end{follow}

Corollary \ref{cor1} is a slight improvement of \cite[Theorem 1.2]{CMS} in the sense that it shows that asymptotic knowledge of the eigenvalues only, as compared with the whole sequence without possibly a finite number of terms in \cite[Theorem 1.2]{CMS}, is needed to identify the zero-th order coefficient of the Schr\"odinger operator, when all boundary measurements of the eigenfunctions are known. 

As Theorem \ref{thm1} and Corollary \ref{cor1} hold for any $\cC^{1,1}$ bounded domain $\Om$, higher regularity assumption (at least $\cC^2$)  plus some suitable geometric condition, that we will make precise below, are requested on $\Om$ for the asymptotic-incomplete case. 
Actually, since $\Om$ is bounded, the boundary $\Ga$ is defined by a local chart made of a finite number $J \in \N$ of bounded open subsets $\cO_j \subset \R^d$,
$\om_j \subset \R^{d-1}$, and functions $\vartheta_j : \overline{\om_j}\rightarrow \R$ having the same regularity as $\Omega$, where $j=1,\ldots,J$, such that
\bel{loc-chart} 
\Ga \subset \bigcup_{j=1}^J \Ga_j\ \mbox{and}\
\Ga_j :=\Ga \cap \cO_j= \{ (\xp,\vartheta_j(\xp)),\ \xp \in \om_j \}.
\ee
Here and below we write $x=(\xp,x_d)$ and $\xp:=(x_1,\ldots,x_{d-1})$, for a generic point $x \in \R^d$. 
Therefore, we have $\Ga_j=f_j^{-1}(0)$ for all $j=1,\ldots,J$, where
$$ f_j(x):=\vartheta_j(\xp) - x_d,\quad x=(\xp,x_d) \in \om_j \times \R, $$
and consequently
\bel{b1} 
\nu_j(\xp) := \frac{\nabla f_j(\xp,\vartheta_j(\xp))}{\abs{\nabla f_j(\xp,\vartheta_j(\xp))}}
=\frac{\left( \nap \vartheta_j(\xp), -1\right)^T}{\sqrt{1+\abs{\nap \vartheta_j(\xp)}^2}},\quad \xp \in \om_j,
\ee
is the unit exterior normal vector to $\Ga_j$ at $x=(\xp,\vartheta_j(\xp))\in \Ga_j$. Herein the symbol $\abs{\cdot}$ denotes the Euclidian norm in either $\R^{d-1}$ or $\R^d$, and $\nap$ stands for $\nabla_{\xp}$, the gradient operator with respect to $x^\prime$.  
We shall impose a dual geometric assumption on $\Om$, requesting, first, that for all $\xi \in \mathbb{S}^{d-1}$ and all $j=1,\ldots,J$, the set 
\bel{gc1}
\cS_j(\xi):=\{ x=(\xp,\vartheta_j(\xp)) \in \Ga_j,\ \nu_j(\xp)\ \mbox{is parallel with}\ \xi \}\ \mbox{is at most finite}
\ee
and, second, that for any $x =(\xp,\vartheta_j(\xp)) \in \cS_j(\xi)$, the following Hessian matrix
\bel{gc2}
{\mathbb D^\prime}^2 \vartheta_j(\xp) := \left( \pd_{\xp_p,\xp_q}^2 \psi_j(\xp)\right)_{1 \le p,q \le d-1} 
\ee
is non singular. We point out that the conditions \eqref{gc1}-\eqref{gc2}, which might seem quite technical at first sight, are especially fulfilled when the domain $\Om$ is strongly convex (see Subection \ref{sec-comments} for more details on this peculiar point).

This being said, the stability result in the asymptotic-incomplete case can be stated as follows, where the symbol $\lceil s \rceil$ denotes the smallest integer greater or equal to $s\in (0,\infty)$.

\begin{theorem}
\label{thm2}
For $s \in \left(\frac{d-1}{2},\infty \right)$, put $m:=\lceil s \rceil + 1$, let the boundary $\Ga$ be $\cC^{m+2}$ and fulfill \eqref{gc1}-\eqref{gc2}, and let $\al \in \cC^{m,r}(\Ga)$ for some $r \in \left( \frac{1}{2},1 \right)$. Pick $q$ and $\tilde{q}$ in $W^{p,\infty}(\Om) \cap Q_M$ and assume that either of the two following conditions
\bel{c1}
q, \tilde{q} \ge 0 \quad \mbox{and}\quad  \al \ge c >0 ,
\ee
or
\bel{c2}
q,\tilde{q} \ge c>0 \quad \mbox{and}\quad \al \ge 0
\ee
is satisfied. Then, if $ (\la_n-\tilde{\la}_n) \in \ell^\infty$ and $\psi_n=\tilde{\psi}_n$ for all $n \ge n_0$, where $n_0 \in \N$ is arbitrary, we have
\[
\norm{q-\tilde{q}}_{H^{-1}(\Om)} \le C \limsup_{n \to \infty} 
\abs{\la_n-\tilde{\la}_n}^\frac{2}{d+2}
\]
for some constant $C>0$ depending only on $\Omega$ and $M$.
\end{theorem}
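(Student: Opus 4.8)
The plan is to reduce Theorem \ref{thm2} to Theorem \ref{thm1} by showing that, under the hypotheses \eqref{c1} or \eqref{c2}, the vanishing of the boundary traces $\psi_n = \tilde\psi_n$ for all $n \ge n_0$ actually forces $\psi_n = \tilde\psi_n$ for \emph{all} $n \in \N$, after which \eqref{se} applies verbatim. The key observation is that the asymptotic-full and asymptotic-incomplete settings differ only in finitely many boundary measurements, so it suffices to control the finite-dimensional discrepancy. First I would record that, under either \eqref{c1} or \eqref{c2}, the form $\ma$ is positive (not merely coercive), so all eigenvalues $\la_n$ are positive; this is what the sign conditions are really for. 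Then I would invoke a Borg--Levinson type argument: the boundary spectral data $\{(\la_n,\psi_n): n \ge n_0\}$ determine, through the resolvent and the associated Weyl/Dirichlet-to-Neumann map, enough analytic information to propagate agreement of the traces down to the missing indices $n < n_0$.

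The mechanism I have in mind is the following. For $z \in \C \setminus \{\la_n\}$, the Robin-to-Neumann (or Poisson-type) operator $\Lambda_q(z)$ associated with $A - z$ admits the spectral representation $\Lambda_q(z) g \sim \sum_n \frac{\langle \text{data}, \psi_n\rangle}{\la_n - z}\psi_n$ (up to the appropriate single/double layer normalizations). If $\psi_n = \tilde\psi_n$ and $\la_n - \tilde\la_n$ is summable-enough along the tail $n \ge n_0$ — here the hypothesis $(\la_n - \tilde\la_n) \in \ell^\infty$ together with Weyl asymptotics $\la_n \sim c_d n^{2/d}$ gives convergence of the relevant series after subtracting a common parametrix — then $\Lambda_q(z) - \Lambda_{\tilde q}(z)$ extends to a finite-rank meromorphic function whose only possible poles are at the finitely many $\la_n$, $\tilde\la_n$ with $n < n_0$. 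The smoothness hypotheses ($\Ga \in \cC^{m+2}$, $\al \in \cC^{m,r}$, $q,\tilde q \in W^{p,\infty}$) are exactly what is needed to push the eigenfunctions into high enough Sobolev spaces on $\Ga$ so that these layer-potential series converge in the operator topology and the subtraction of a common principal part is legitimate. The geometric conditions \eqref{gc1}--\eqref{gc2} enter here through stationary-phase/oscillatory-integral bounds on the boundary operators: the non-degeneracy of the Hessian at the finitely many points where the normal is parallel to a given direction yields the decay of the Fourier transform of the boundary layer kernels, hence the needed mapping properties.

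From the meromorphic finite-rank difference one argues that its principal parts at the exceptional eigenvalues must themselves vanish: matching poles on both sides, combined with the fact that the residues are rank-one operators built from $\psi_n \otimes \psi_n$ (resp.\ $\tilde\psi_n \otimes \tilde\psi_n$), forces $\la_n = \tilde\la_n$ and $\psi_n = \tilde\psi_n$ for $n < n_0$ as well — this is the standard completion step in Borg--Levinson theory, using that an orthonormal system of eigenfunctions is determined by the poles and residues of the resolvent. Once $\psi_n = \tilde\psi_n$ holds for every $n \in \N$, Theorem \ref{thm1} delivers $\norm{q-\tilde q}_{H^{-1}(\Om)} \le C \limsup_{n\to\infty}\abs{\la_n - \tilde\la_n}^{2/(d+2)}$ with $C = C(\Om,M)$, which is precisely the claimed bound.

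The main obstacle I anticipate is the convergence and finite-rank analysis of $\Lambda_q(z) - \Lambda_{\tilde q}(z)$: one must subtract a sufficiently accurate common parametrix (the same for $q$ and $\tilde q$, since it depends only on $\Om$, $\Ga$, $\al$, and the symbol at infinity) so that the remainder series converge in a strong enough topology on $\Ga$, and this is exactly where the higher regularity of $\Ga$ and $\al$ and the oscillatory-integral estimates granted by \eqref{gc1}--\eqref{gc2} are consumed. Getting uniformity of the constant $C$ in $q,\tilde q \in Q_M$ through this step also requires care, but since the parametrix and the stationary-phase bounds are $q$-independent and the final inequality comes from the $q$-uniform Theorem \ref{thm1}, this should go through once the structural claim $\psi_n = \tilde\psi_n$ for all $n$ is established.
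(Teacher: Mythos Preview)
Your reduction to Theorem~\ref{thm1} rests on a claim that cannot hold: you assert that $\psi_n=\tilde\psi_n$ for $n\ge n_0$ forces $\la_n=\tilde\la_n$ and $\psi_n=\tilde\psi_n$ for $n<n_0$ as well, via pole-matching for $\Lambda_q(z)-\Lambda_{\tilde q}(z)$. But the hypotheses place no constraint on $\la_n-\tilde\la_n$ for $n<n_0$, and the conclusion is a genuine stability inequality with possibly positive right-hand side, so $q\ne\tilde q$ is allowed and the low-index data need not agree. Your structural claim that the difference of boundary maps has poles only at the finitely many $\la_n,\tilde\la_n$ with $n<n_0$ is also false: the tail eigenvalues are \emph{not} assumed equal, so the tail contributes
\[
\sum_{n\ge n_0}\frac{\tilde\la_n-\la_n}{(\la_n-z)(\tilde\la_n-z)}\,\psi_n\otimes\psi_n,
\]
which has a simple pole at every $\la_n$ and every $\tilde\la_n$ with $n\ge n_0$. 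The difference is neither finite-rank nor has finitely many poles, and no residue cancellation can be inferred. Even were the pole set finite, nothing forbids a nonzero meromorphic operator-valued function from having rank-one principal parts at distinct points $\la_n\ne\tilde\la_n$; the ``completion step'' you invoke belongs to the \emph{uniqueness} setting where $\Lambda_q\equiv\Lambda_{\tilde q}$ is already known, which is not the case here.

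The paper does not try to recover the missing traces. From \eqref{s6} one has
\[
\hat\varrho(\xi)=\lim_{\tau\to\infty}\Bigl(\sum_{n=1}^{n_0-1}\frac{d_n-\tilde d_n}{\la_n-\la_\tau}+\sum_{n\ge1}\frac{\tilde\la_n-\la_n}{(\la_n-\la_\tau)(\tilde\la_n-\la_\tau)}\tilde d_n\Bigr),
\]
and the whole task is to show that each term $(d_n-\tilde d_n)/(\la_n-\la_\tau)$ of the finite first sum tends to $0$ as $\tau\to\infty$. Since $|\la_n-\la_\tau|\sim\tau^2$ while $g,h$ carry a prefactor $\sqrt{\la_\tau}\sim\tau$ times an oscillatory exponential on $\Ga$, the naive bound gives $|d_n|=|(g,\psi_n)(\psi_n,h)|=O(\tau^2)$, which is useless. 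The decisive input is the Van-der-Corput estimate of Proposition~\ref{pr-vdC}, $\bigl|\int_\Ga e^{i\tau\tea\cdot x}\phi\,ds\bigr|\le C\tau^{-1}\|\phi\|_{H^s(\Ga)}$, which upgrades $(g,\psi_n)$ to $O(1)$. \emph{This} is where \eqref{gc1}--\eqref{gc2} are actually consumed (nondegenerate stationary phase on each boundary chart), and the regularity hypotheses on $\Ga,\al,q$ together with \eqref{c1}/\eqref{c2} serve only to place $\psi_n$ in $H^m(\Ga)$ via elliptic regularity (Lemma~\ref{lm-ire}), so that the estimate applies with $\phi=\al^j e^{-\om_\tau\cdot x}\psi_n$. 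Once the finite sum is killed, \eqref{dm0} holds and Step~2 of the proof of Theorem~\ref{thm1} applies verbatim.
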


The novelty of the stability inequality of Theorem \ref{thm2}, as compared to the one stated  in \cite[Theorem 1.2]{CMS}, is that it does not involve any quantitative information about the difference $\psi_n-\tilde{\psi}_n$ for $n=1,\ldots,n_0-1$. Moreover, Theorem \ref{thm2} immediately yields the subsequent uniqueness result.

\begin{follow}
\label{cor2}
Let $\Om$, $\al$, $q$ and $\tilde{q}$ be as in Theorem \ref{thm2}. Assume that $\displaystyle \lim_{n \to \infty} (\la_n-\tilde{\la}_n)=0$ and that $\psi_n=\tilde{\psi}_n$ on $\Gamma$ for all $n\ge n_0$, where $n_0$ is arbitrary in $\N$. Then we have $q=\tilde{q}$.
\end{follow}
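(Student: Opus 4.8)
The plan is to deduce the uniqueness statement directly from the stability inequality of Theorem \ref{thm2}. Under the stated hypotheses on $\Om$, $\al$, $q$, $\tilde q$, one of the conditions \eqref{c1} or \eqref{c2} is in force and $q,\tilde q \in W^{p,\infty}(\Om) \cap Q_M$ for the appropriate $M$, so Theorem \ref{thm2} applies with the given $n_0$. First I would observe that the assumption $\lim_{n\to\infty}(\la_n-\tilde\la_n)=0$ implies in particular that the sequence $(\la_n-\tilde\la_n)$ is bounded, hence $(\la_n-\tilde\la_n) \in \ell^\infty$, which is the remaining hypothesis needed to invoke Theorem \ref{thm2}. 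Together with $\psi_n=\tilde\psi_n$ for all $n\ge n_0$, the theorem yields
\[
\norm{q-\tilde q}_{H^{-1}(\Om)} \le C \limsup_{n\to\infty} \abs{\la_n-\tilde\la_n}^{\frac{2}{d+2}}.
\]

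Next I would evaluate the right-hand side. Since $\la_n-\tilde\la_n \to 0$ as $n\to\infty$ and $t\mapsto t^{\frac{2}{d+2}}$ is continuous at $0$ with value $0$, we get $\limsup_{n\to\infty}\abs{\la_n-\tilde\la_n}^{\frac{2}{d+2}} = 0$. Therefore $\norm{q-\tilde q}_{H^{-1}(\Om)} \le 0$, which forces $q-\tilde q=0$ in $H^{-1}(\Om)$, and hence $q=\tilde q$ as elements of $L^\sigma(\Om)\hookrightarrow H^{-1}(\Om)$ (the embedding being injective), completing the proof.

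There is essentially no obstacle here: the corollary is a formal consequence of Theorem \ref{thm2}, the only minor points being to note that $\ell^\infty$-boundedness follows from convergence to zero and that vanishing of the $H^{-1}$-norm of the difference of two $L^\sigma$ functions gives their equality. All the analytic difficulty has already been absorbed into the proof of Theorem \ref{thm2}.
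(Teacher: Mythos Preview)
Your proposal is correct and follows exactly the route the paper intends: the text states that Theorem \ref{thm2} ``immediately yields'' Corollary \ref{cor2} without writing out any details, and your argument is precisely the natural unpacking of this remark. The only content is noting that $\la_n-\tilde{\la}_n\to 0$ gives $(\la_n-\tilde{\la}_n)\in\ell^\infty$ and that the right-hand side of the stability inequality then vanishes, which you have done.
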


Another useful consequence of Theorem \ref{thm2} is the following identification result with incomplete partial boundary data, where the trace of the normal derivative of the eigenfunctions is taken only on an arbitrary open subset of $\Ga$.    

\begin{follow}
\label{cor3}
Let $\Om$, $\al$, $q$ and $\tilde{q}$ be as in Theorem \ref{thm2}. Assume that
$$
q = \tilde{q}\quad \mbox{in}\ \Om_0,
$$
where $\Om_0$ is a fixed neighborhood of $\Ga$ in $\overline{\Om}$. 
Then, given a non-empty open subset $\Ga_* \subset \Ga$ and $n_0 \in \N$, we have $q=\tilde{q}$ in $\Om$ whenever the identity 
$$(\la_n,{\phi_n}_{| \Ga_*}) =  (\tilde{\la}_n, {\tilde{\phi}_n}_{| \Ga_*})$$ 
holds for all $n \ge n_0$.
\end{follow}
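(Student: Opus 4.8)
The plan is to deduce Corollary~\ref{cor3} from Theorem~\ref{thm2} by showing that the hypotheses of the corollary imply the hypotheses of the theorem, namely that $\psi_n=\tilde\psi_n$ on all of $\Ga$ for every $n\ge n_0$. The starting point is the following classical observation: since $q=\tilde q$ in the neighborhood $\Om_0$ of $\Ga$, the functions $\phi_n$ and $\tilde\phi_n$ (more precisely, appropriate linear combinations of eigenfunctions within each eigenspace) both satisfy the \emph{same} elliptic equation $-\Delta u+q u=\la_n u$ in $\Om_0$ together with the \emph{same} Robin condition $\pd_\nu u+\al u=0$ on $\Ga$. Because we assume $\la_n=\tilde\la_n$ and ${\phi_n}_{|\Ga_*}={\tilde\phi_n}_{|\Ga_*}$, the difference $w:=\phi_n-\tilde\phi_n$ solves $-\Delta w+(q-\la_n)w=0$ in $\Om_0$ with $w=0$ on $\Ga_*$ and $\pd_\nu w+\al w=0$ on $\Ga$, hence in particular $\pd_\nu w=0$ on $\Ga_*$ as well. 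Thus $w$ has vanishing Cauchy data on the open piece $\Ga_*\subset\pd\Om_0$.

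The next step is unique continuation from Cauchy data: by the Aronszajn--Cordes type unique continuation principle for second-order elliptic operators with bounded (indeed $W^{p,\infty}$) coefficients, a solution of $-\Delta w+Vw=0$ on a connected open set that vanishes to infinite order at a boundary point — which a function with zero Cauchy data on a relatively open boundary patch does, after reflection or via Green's identity arguments — must vanish identically on the connected component of $\Om_0$ touching $\Ga_*$. One has to be slightly careful because $\Om_0$ need not be connected, and $\Ga_*$ may meet only some of its components; but the conclusion we actually need is only about the \emph{trace on $\Ga$}. A cleaner route is: let $\Om_0'$ be the connected component of $\Om_0$ whose closure contains $\Ga_*$; on $\Om_0'$ unique continuation gives $w\equiv 0$, hence $\phi_n=\tilde\phi_n$ there, hence their normal derivatives agree on the portion $\Ga\cap\overline{\Om_0'}$. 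To upgrade ``some portion of $\Ga$'' to ``all of $\Ga$'' one iterates: once $\phi_n=\tilde\phi_n$ is known on an open piece of $\Ga$, it is known on the components of $\Om_0$ adjacent to that piece, and since $\Ga$ is the (connected, if $\Om$ is connected) boundary of $\Om$ this propagates to all of $\Ga$. If $\Ga$ has several connected components one repeats the argument componentwise; in all cases one concludes $\psi_n=\tilde\psi_n$ on $\Ga$.

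With $\la_n=\tilde\la_n$ (so trivially $(\la_n-\tilde\la_n)\in\ell^\infty$ and $\limsup_n|\la_n-\tilde\la_n|^{2/(d+2)}=0$) and $\psi_n=\tilde\psi_n$ on $\Ga$ for all $n\ge n_0$, Theorem~\ref{thm2} applies and yields $\|q-\tilde q\|_{H^{-1}(\Om)}\le C\cdot 0=0$, i.e.\ $q=\tilde q$ in $\Om$, which is the assertion. One subtlety to record is that the eigenfunctions $\phi_n$, $\tilde\phi_n$ are only defined up to the choice of orthonormal basis inside each eigenspace; but the hypothesis ${\phi_n}_{|\Ga_*}={\tilde\phi_n}_{|\Ga_*}$ is to be read for the \emph{given} bases, and the argument above uses only that each $\phi_n$ (resp.\ $\tilde\phi_n$) solves the PDE with its eigenvalue, so no matching of bases beyond the hypothesis itself is needed.

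The main obstacle I expect is the unique continuation step across the boundary with only $W^{p,\infty}$ (not smooth) coefficients, and in particular cleanly handling the geometry when $\Om_0$ is a mere ``neighborhood of $\Ga$ in $\overline\Om$'' rather than a nice collar: one must make sure that the connected-component bookkeeping actually delivers the trace identity on \emph{all} of $\Ga$, and that boundary unique continuation (Cauchy data vanishing on a relatively open patch $\Rightarrow$ $w\equiv0$ near it) is invoked in a form valid for the Robin problem — which, since $\pd_\nu w+\al w=0$ together with $w=0$ on $\Ga_*$ forces $\pd_\nu w=0$ there, reduces to the standard interior/boundary unique continuation after, say, an even reflection or a direct Carleman-estimate argument. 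Everything else is a routine reduction to Theorem~\ref{thm2}.
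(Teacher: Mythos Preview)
Your proposal is correct and follows essentially the same route as the paper: form the difference $\varphi_n=\phi_n-\tilde\phi_n$, observe that it satisfies $-\Delta\varphi_n+(q-\la_n)\varphi_n=0$ in $\Om_0$ with vanishing Cauchy data on $\Ga_*$, invoke unique continuation to get $\varphi_n=0$ in $\Om_0$ (the paper does this in one line by citing a unique-continuation lemma, without your connectedness bookkeeping), deduce $\psi_n=\tilde\psi_n$ on all of $\Ga$, and then apply Corollary~\ref{cor2}/Theorem~\ref{thm2}. One small slip: the boundary of a connected domain need not be connected (think of an annulus), so your parenthetical ``connected, if $\Om$ is connected'' is wrong, though you immediately cover the disconnected case anyway.
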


\subsection{Remark on the special case of strongly convex domains}
\label{sec-comments}

The domain $\Om$ is said strongly convex if the local chart \eqref{loc-chart} of $\Ga$ is associated with strongly convex mappings $\vartheta_j$ on $\om_j$, $j=1,\ldots,J$, that is to say if there exists $\be_j \in (0,\infty)$ such that the function
$$ \om_j \ni \xp \mapsto \vartheta_j(\xp) - \frac{\be_j}{2} \abs{\xp}^2 $$
is convex. This can be equivalently reformulated as either
\bel{r-cvx1} 
(\nap \vartheta_j(\xp)-\nap \vartheta_j(\yp)) \cdot (\xp-\yp) \ge \be_j \abs{\xp-\yp}^2,\ \xp \in \om_j,\ \yp \in \om_j,
\ee
where the notation $\cdot$ stands for the Euclidian scalar product in $\R^{d-1}$,
or as
\bel{r-cvx2} 
{\mathbb D^\prime}^2 \vartheta_j(\xp) \yp \cdot \yp \ge \be_j \abs{\yp}^2,\ \xp \in \om_j,\ \yp \in \om_j.
\ee
It turns out that any strongly convex $\cC^2$ bounded domain $\Om$ satisfies the geometric condition \eqref{gc1}-\eqref{gc2}. As a matter of fact, for each fixed $\xi=(\xip,\xi_d)^T \in \mathbb{S}^{d-1}$ and all $j=1,\ldots,J$, the set $\cS_j(\xi)$ contains at most one point. Indeed, for any two points $\xp \in \cS_j(\xi)$ and $\yp \in \cS_j(\xi)$, we see from \eqref{b1} that $\xi_d \neq 0$ and
$$
\nap \vartheta_j(\xp)=\nap \vartheta_j(\yp)=-\frac{\xip}{\xi_d},
$$
which, combined with \eqref{r-cvx1}, yields that $\xp=\yp$. 
Moreover, it follows readily from \eqref{r-cvx2} that the matrix ${\mathbb D^\prime}^2 \vartheta_j(\xp)$ is positive definite, showing that $\Om$ fulfills \eqref{gc1}-\eqref{gc2}.

\subsection{Selected bibliography}

Mathematical inverse spectral theory dates back, at least, to the late 20s, when
Ambarzumian's paper \cite{A} established that the real-valued bounded potential $q$ of the periodic realization of the Sturm-Liouville operator $-\partial_x^2+q$ acting in $L^2(0,2\pi)$, is zero, if and only if its spectrum equals $\{ n^2,\ n \in \N \}$. For the same operator, but defined on $(0,\pi)$ and equipped with homogeneous Dirichlet boundary conditions (DBC), Borg \cite{B} and Levinson \cite{L} showed that, although the spectrum does not discriminate between symmetric potentials about the midpoint of $(0,\pi)$, additional data $\{ \norm{\phi_n}_{L^2(0,\pi)},\ n \in \N \}$, where $\{ \phi_n,\ n \in \N \}$ is a $L^2(0,\pi)$-orthogonal basis of eigenfunctions satisfying $\phi_n^\prime(0)=1$, uniquely determine $q$. Later on, substituting $\phi_n^\prime(\pi)$ by $\norm{\phi_n}_{L^2(0,\pi)}$, the same uniqueness result was derived by Gel'fand and Levitan in \cite{GL}.

The approaches from Borg, Levinson or Gel'fand and Levitan, are highly one-dimensional techniques that could not be adapted to the multi-dimensional case. But one great idea emerged in the 80s that paved the way to solving multi-dimension Borg-Levinson problems. In 1987, inspired by the scattering results of Fadeev \cite{F}, Silvester and Uhlmann introduced the so-called complex geometric optics (CGO) solutions in the context of inverse problems. Using these specifically designed perturbations of the exponential function, Nachman, Sylvester and Uhlmann \cite{NSU}, and Novikov \cite{N}, showed unique determination of the Dirichlet Laplacian in dimension 2 or greater, from its boundary spectral data (BSD), defined as the eigenpairs $(\la_n,\partial_\nu \phi_n)$, $n \in \N$, formed by the eigenvalues and the Neumann trace of the corresponding eigenfunctions. The stability issue for the same problem, was solved by Alessandrini and Sylvester in \cite{AS} (see also \cite[Theorem 2.31]{C1}). 
In 1991, Isozaki noticed that the identification result of \cite{NSU} is still valid by replacing the full set of BSD $\{ (\la_n,\partial_\nu \phi_n),\ n \in \N \}$ by an incomplete one $\{ (\la_n,\partial_\nu \phi_n),\ n \ge n_0 \}$ where $n_0 \in \N$ is arbitrary. From this point onwards, the focus was on downsizing the data. Following this trend, it was established in \cite{CS,KKS,S} that $q$ can be stably identified from the asymptotics of the BSD, i.e., that knowledge of
$\lim_{n \to \infty} (\la_n,\partial_\nu \phi_n)$ stably determines the Dirichlet Laplacian.

Notice that in the above quoted references, the unknown potential $q$ is assumed to be bounded (in fact assuming that $q\in L^d(\Omega,\R)$ is enough). But still there is a small number of mathematical papers dealing with inverse spectral problems with unbounded coefficients (precisely when $q\in L^p(\Omega,\R)$ with $d/2\le p<d$). The first one is \cite{PS} and was written by P\"aiv\"arinta and Serov. They showed unique determination of real-valued electric potentials in $L^r(\Om,\R)$, $r> d \slash 2$, from the full BSD. Later on, in \cite{P}, Pohjola retrieved unknown potentials in $L^{d \slash 2}(\Om,\R)$ from either full BSD when $d = 3$, or incomplete BSD when $d \neq 4,$ and of an unknown potential in $L^r(\Om,\R)$ with $r > d \slash 2$ and $d=3$, from incomplete BSD. Stability inequalities associated with Pohjola's uniqueness results \cite{P} were established by the first author in \cite{C4}.  More recently, it was established in \cite{BKMS} that, when $d \ge 3$ and $r=\max(2,3d/5)$, $q \in L^r(\Om,\R)$ is uniquely determined by the asymptotics of the BSD, while the stability issue for the same problem was treated in
\cite{KS}. 

The uniqueness of the determination of the Dirichlet-Laplace-Beltrami operator from its spectral boundary data was studied by Katchalov, Kurylev, and Lassas in \cite{KKL}. Various stability inequalities for this inverse spectral problem have recently been established by the first author and Yamamoto in \cite{CY}.

All the aforementioned results were derived for Schr\"odinger operators endowed with Dirichlet boundary conditions, except for \cite{NSU} where a bounded electric potential is determined by full BSD of the Robin Laplacian, defined as $(\lambda_n,\psi_n)$, $n \in \N$, where $\psi_n={\phi_n}_{| \Ga}$ stands for the Dirichlet trace of the eigenfunction $\phi_n$. Using a heuristic approach, Smirnova  asserted in \cite{Sm} that semi-incomplete BSD, i.e., full data BSD where a finite number of the eigenvalues remain unknown, uniquely  determine the bounded potential. This claimed was proved in \cite{CMS} and even extended to unbounded potentials $q \in L^r(\Om,\R)$, where $r=d \slash 2$ when $d \ge 4$, and $r>d \slash 2$ when $d=3$.

\subsection{Outline}

The article is organized as follows. In Section \ref{sec-pre} we derive the so-called Isozaki formula, that the derivation of Theorems \ref{thm1} and \ref{thm2}, and Corollary \ref{cor3}, is based on. The proof of these three results is displayed in Section \ref{sec-proof}. In particular, the proof of Theorem \ref{thm2} boils down to a Van-der-Corput type inequality that is established in Section \ref{sec-VdC}. Finally, in the Appendix, we gather several technical results that are needed by the proof of Theorem \ref{thm2}.

\section{Preliminaries}

\label{sec-pre}
Let $q \in L^{\sigma}(\Om,\R)$ and let $\lambda \in \rho(A)$, where $\rho(A) := \C \setminus \{ \la_n,\ n \in \N \}$ is the resolvent set of $A$. Pick $G \in H^{2+\eps}(\Om)$, where $\eps=0$ when $d \ne 4$, and $\eps$ is arbitrary in $(0,1)$ when $d=4$, in such a way that $q G \in L^2(\Om)$ by Sobolev imbedding theorem (see, e.g., \cite[Section 2.2]{CMS}). Then, putting
$g:=\partial_{\nu} G + \al G_{| \Ga}$, we recall for further use from \cite[Lemma 2.2]{CMS} that
\bel{p0} 
u_\lambda(g) := (A-\la)^{-1}(\Delta -q+\la)G + G,
\ee
is the unique $H^1(\Om)$-solution of the following BVP (boundary value problem)
\bel{go0} 
\left\{ \begin{array}{ll} (-\Delta+q-\la)u =0 & \mbox{in}\ \Om, \\ \pd_\nu u + u_{| \Ga}=g & \mbox{on}\ \Ga. \end{array} \right.
\ee

\subsection{Geometric optic solutions}
For $\om \in \mathbb{S}^{d-1}$ and $\la \in \rho(A)$, we introduce
$$ \mfe_{\la,\om}(x):= e^{i \sqrt{\la}\ \om \cdot x}, $$
and we set
\bel{go1}
G:=\mfe_{\la,\om},\quad g:=\partial_{\nu} G + \al G_{| \Ga}.
\ee
Similarly, for $\tea \in \mathbb{S}^{d-1}$ we put
\bel{go2} 
H:=\mfe_{\la,\tea},\quad h:=\partial_{\nu} H + \al H_{| \Ga}
\ee
and define
\bel{go3}
S(\la,\om,\tea) := (u_\la , h )_{L^2(\Ga)} = \int_{\Ga} u_\la \overline{h}\ \dd s,
\ee
where $u_\la:=u_\la(g)$ is given by \eqref{p0}. Now, using that $u_\la$ solves \eqref{go0}, we get upon arguing as in the derivation of \cite[Eq. (3.6)-(3.7)]{CMS}, that
\bel{go3b}
S(\la,\om,\tea) = \int_{\Ga} (i \sqrt{\la} \om \cdot \nu + \al) \mfe_{\la,\om-\tea}\ \dd s + \upsilon(\la,\om,\tea),
\ee
where
\begin{align}
&\upsilon(\la,\om,\tea):=r(\la,\om,\tea) - \int_{\Om} q \mfe_{\la,\om-\tea}\ \dd x, \label{go4}
\\
&r(\la,\om,\tea):= \int_{\Om} q \mfe_{\la,-\tea} (A-\la)^{-1} (q \mfe_{\la,\om})\ \dd x. \label{go4.1}
\end{align}
Next, for $\tilde{q} \in L^{\sigma}(\Om)$, we denote by $\tilde{A}$ the operator defined in Subsection \ref{sec-defA} upon replacing $q$ by $\tilde{q}$. Then, we pick $\la \in \rho(A) \cap \rho(\tilde{A})$, write $\tilde{u}_\la$ instead of $\tilde{u}_\la(g)$, and define 
$\tilde{S}(\la,\om,\tea)$, $\tilde{\upsilon}(\la,\om,\tea)$ and $\tilde{r}(\la,\om,\tea)$ by substituting $(\tilde{q},\tilde{A},\tilde{u}_\la)$ for $(q,A,u_\la)$ in \eqref{go3} and in \eqref{go4}. Setting 
\begin{align}
&S_*(\la,\om,\tea):=\tilde{S}(\la,\om,\tea)-S(\la,\om,\tea), \label{go5}
\\
& r_*(\la,\om,\tea):=\tilde{r}(\la,\om,\tea)-r(\la,\om,\tea), \label{go5.1}
\end{align}
we then deduce from  \eqref{go3b}-\eqref{go5.1} that
\bel{go6} 
\int_\Om \varrho \mfe_{\la,\om-\tea}\ \dd x = S_*(\la,\om,\tea) - r_*(\la,\om,\tea),
\ee
where
\bel{go7}
\varrho := (\tilde{q}-q) \chi_{\Om},
\ee
the notation $\chi_\Om$ standing for the characteristic function of $\Om$.

\subsection{Isozaki's asymptotic formula}

Fix $\xi \in \R^d$, pick $\eta \in \mathbb{S}^{d-1}$ such that
$$ \xi \cdot \eta =0, $$
and for $\tau \in \left( \frac{\abs{\xi}}{2},\infty \right)$, plug $\la=\la_\tau:= (\tau+i)^2$,
\begin{align}
&\tea=\tea_\tau:=\left( 1- \frac{\abs{\xi}}{4 \tau^2} \right)^{\frac{1}{2}} \eta - \frac{\xi}{2 \tau},\label{if0}
\\
& \om=\om_\tau:= \left( 1- \frac{\abs{\xi}}{4 \tau^2} \right)^{\frac{1}{2}} \eta + \frac{\xi}{2 \tau} \label{if0.1}
\end{align}
into \eqref{go6}. We obtain that
\bel{if1}
\hat{\varrho}\left( \left( 1 + \frac{i}{\tau} \right) \xi \right) = S_*(\la_\tau,\om_\tau,\tea_\tau) - r_*(\la_\tau,\om_\tau,\tea_\tau),
\ee
where $\hat{\varrho}$ denotes the Fourier-Laplace transform of $\varrho$:
\[
\hat{\varrho}(z):=\int_{\mathbb{R}^d}e^{-i x\cdot z}\varrho(x)dx,\quad z\in \mathbb{C}^d.
\] 
Note that, since $\varrho$ has  compact support, $\hat{\varrho}$ is an entire function.

Further, in light of \cite[Eq. (3.14)]{CMS}, we have
$$ \lim_{\tau \to \infty} r(\la_\tau,\om_\tau,\tea_\tau)=\lim_{\tau \to \infty} \tilde{r}(\la_\tau,\om_\tau,\tea_\tau)=0. $$
This combined with \eqref{go5}, \eqref{if1} and the continuity of $\hat{\varrho}$, then yields that
\bel{if2}
\hat{\varrho}( \xi ) = \lim_{\tau \to \infty} S_*(\la_\tau,\om_\tau,\tea_\tau).
\ee
The next step is to express the right-hand side of \eqref{if2} in terms of the BSD. This will be done by relating 
$S_*(\la_\tau,\om_\tau,\tea_\tau)$ for fixed $\tau \in (0,\infty)$, to the BSD.

\subsection{A representation formula}
Let us start by recalling from \cite[Lemma 2.3]{CMS} that for $\mu \in \rho(A)$, we have
$$ {u_{\la_\tau}}_{| \Ga} = {u_\mu}_{| \Ga} + (\la-\mu) \sum_{n \ge 1} \frac{(g , \psi_n)_{L^2(\Ga)}}{(\la_n-\la_\tau)(\la_n-\mu)}\psi_n,\quad \tau \in (0,\infty),$$
the series being convergent in $H^{\frac{1}{2}}(\Ga)$. 
Putting this together with \eqref{go3} and \eqref{go5}, we get for all $\tau \in (0,\infty)$ that
\bel{s1}
S_*(\la_\tau,\om_\tau,\tea_\tau) = S_*(\mu,\om_\tau,\tea_\tau) + \ell_\tau(\mu) + \kappa_\tau(\mu),\quad \mu \in \rho(A) \cap \rho(\tilde{A}),
\ee
where we have set
$$
\ell_{\tau}(\mu):=(\la_\tau-\mu) \sum_{n=1}^\infty \frac{d_n-\tilde{d}_n}{(\la_n-\la_\tau)(\la_n-\mu)},
$$
$$
\kappa_{\tau}(\mu):= (\la_\tau-\mu)\sum_{n=1}^\infty \left( \frac{1}{(\la_n-\la_\tau)(\la_n-\mu)}- \frac{1}{(\tilde{\la}_n-\la_\tau)(\tilde{\la}_n-\mu)} \right) \tilde{d}_n,
$$
and
\bel{s2} 
d_n:= (g , \psi_n ) (\psi_n, h ),\quad \tilde{d}_n:= (g , \tilde{\psi}_n ) (\tilde{\psi}_n, h ),\quad n \in \N.
\ee
Notice that \eqref{s1} holds for all $\mu \in (-\infty,\ \min(\la_1,\tilde{\la}_1))$, and that 
\bel{s3}
\lim_{\mu \to -\infty} S_*(\mu,\om_\tau,\tea_\tau) =0,\quad \tau \in (0,\infty),
\ee
according to \cite[Lemma 2.5]{CMS}. Moreover, keeping in mind that $(\tilde{\la}_n-\la_n) \in \ell^\infty$, we have
\bel{s4}
\lim_{\mu \to -\infty} \ell_{\tau}(\mu) = \sum_{n=1}^\infty \frac{d_n-\tilde{d}_n}{\la_n-\la_\tau},\ \tau \in (0,\infty),
\ee
from \cite[Eq. (3.22)]{CMS}, and
\bel{s5}
\lim_{\mu \to -\infty} \kappa_{\tau}(\mu) = \sum_{n=1}^\infty \frac{\tilde{\la}_n-\la_n}{(\la_n-\la_\tau)(\tilde{\la}_n-\la_\tau)} \tilde{d}_n,\quad \tau \in (0,\infty),
\ee
according to \cite[Eq. (3.24)]{CMS}. Therefore, sending $\mu$ to $-\infty$ in \eqref{s1}, we obtain that
$$ 
S_*(\la_\tau,\om_\tau,\tea_\tau) = \sum_{n=1}^\infty \frac{d_n-\tilde{d}_n}{\la_n-\la_\tau} +  \sum_{n=1}^\infty \frac{\tilde{\la}_n-\la_n}{(\la_n-\la_\tau)(\tilde{\la}_n-\la_\tau)} \tilde{d}_n,\quad  \tau \in (0,\infty),
$$
according to \eqref{s3}--\eqref{s5}. In view of \eqref{if2}, this implies
\bel{s6}
\hat{\varrho}( \xi ) = \lim_{\tau \to \infty} \left( \sum_{n=1}^\infty \frac{d_n-\tilde{d}_n}{\la_n-\la_\tau} +  \sum_{n=1}^\infty \frac{\tilde{\la}_n-\la_n}{(\la_n-\la_\tau)(\tilde{\la}_n-\la_\tau)} \tilde{d}_n \right),
\ee
whenever $(\tilde{\la}_n-\la_n) \in \ell^\infty$.

\section{Proof of Theorems \ref{thm1} and \ref{thm2}, and Corollary \ref{cor3}}
\label{sec-proof}

\subsection{Proof of Theorem \ref{thm1}}
\label{sec-prthm1}
We break the proof into two main steps.\\

\noindent {\it Step 1.} We show that
\bel{dm0}
\abs{\hat{\varrho}(\xi)} \le C \limsup_{n \to \infty} \abs{\la_n-\tilde{\la}_n}. 
\ee
For this purpose, we notice that, since $d_n=\tilde{d}_n$ for all $n \in \N$ by assumption, we have
\bel{dm1}
\hat{\varrho}(\xi) = \lim_{\tau \to \infty}
\sum_{n=1}^\infty \frac{\tilde{\la}_n-\la_n}{(\la_n-\la_\tau)(\tilde{\la}_n-\la_\tau)} \tilde{d}_n,\quad \xi \in \R^d.
\ee
according to \eqref{s6}. 

Set $\La_1:=\norm{(\la_n-\tilde{\la}_n)}_{\ell^\infty}$. Then, bearing in mind that
for all $n \in \N$, 
$$ \abs{\frac{\la_\tau-\tilde{\la}_n}{\la_\tau-\la_n}} \le 1 + 
\frac{\La_1}{\abs{\Im \la_\tau}} \le 1 + \frac{\La_1}{2 \tau},\ \tau \in (0,\infty),$$
we have
\bea
\abs{\frac{\tilde{\la}_n-\la_n}{(\la_n-\la_\tau)(\tilde{\la}_n-\la_\tau)} \tilde{d}_n} & = & 
\abs{\frac{(\tilde{\psi}_n,g)}{\tilde{\la}_n-\la_\tau}} \abs{\frac{(\tilde{\psi}_n,h)}{\la_n-\la_\tau}} \abs{\la_n-\tilde{\la}_n}
\nonumber \\
& \le & \left( 1 + \frac{\La_1}{2 \tau} \right) \abs{\frac{(\tilde{\psi}_n,g)}{\tilde{\la}_n-\la_\tau}} \abs{\frac{(\tilde{\psi}_n,h)}{\tilde{\la}_n-\la_\tau}} \abs{\la_n-\tilde{\la}_n}. \label{dm2}
\eea
Then, using that 
$$ \sum_{n=1}^\infty \abs{\frac{(\tilde{\psi}_n,g)}{\tilde{\la}_n-\la_\tau}}^2 = \norm{u_{\la_\tau}(g)}_{L^2(\Om)}^2
$$
is bounded by a positive constant $C$ that is independent of $\tau$ and $\xi$, 
we get from \eqref{dm2} upon applying the Cauchy-Schwarz inequality, that for all $N \in \N$,
\bea
\sum_{n=N}^\infty \abs{\frac{\tilde{\la}_n-\la_n}{(\la_n-\la_\tau)(\tilde{\la}_n-\la_\tau)} \tilde{d}_n}
& \le & \left( 1 + \frac{\La_1}{2 \tau} \right) \norm{u_{\la_\tau}(g)}_{L^2(\Om)} \norm{u_{\la_\tau}(h)}_{L^2(\Om)} \La_N \nonumber \\
& \le &  C \left( 1 + \frac{\La_1}{2 \tau} \right) \La_N, \label{dm3}
\eea
where $\La_N:= \sup_{n \ge N} \abs{\la_n-\tilde{\la}_n}$.
Further, for all $n \in \N$, we have
\beas
\abs{\frac{\tilde{d}_n}{(\la_n-\la_\tau)(\tilde{\la}_n-\la_\tau)}} & \le & 
\frac{\norm{g}_{L^2(\Ga)} \norm{h}_{L^2(\Ga)} \norm{\tilde{\psi}_n}_{L^2(\Ga)}^2}
{\abs{(\tau+i)^2-\la_n} \abs{(\tau+i)^2-\tilde{\la}_n}} \\
& \le & \frac{C_n \tau^2}{\left( (\tau^2-\la_n-1)^2+4\tau^2 \right)^{\frac{1}{2}} \left( (\tau^2-\tilde{\la}_n-1)^2+4\tau^2 \right)^{\frac{1}{2}}},
\eeas
for some positive constant $C_n$, which is independent of $\tau$. Thus, it holds true that 
$$ \lim_{\tau \to \infty} \abs{\frac{\tilde{d}_n}{(\la_n-\la_\tau)(\tilde{\la}_n-\la_\tau)}} = 0,\quad n \in \N, $$
and consequently that
$$ \lim_{\tau \to \infty} \sum_{n=1}^{N-1} \abs{\frac{\tilde{d}_n}{(\la_n-\la_\tau)(\tilde{\la}_n-\la_\tau)}} = 0,\quad N \in \N. $$
It follows from this and \eqref{dm3} that for all $N \in \N$, 
$$ \limsup_{\tau \to \infty} \left( \sum_{n=1}^\infty \abs{\frac{\tilde{d}_n}{(\la_n-\la_\tau)(\tilde{\la}_n-\la_\tau)}} \right)
\le C \La_N. $$
Therefore
$$ \limsup_{\tau \to \infty} \left( \sum_{n=1}^\infty \abs{\frac{\tilde{d}_n}{(\la_n-\la_\tau)(\tilde{\la}_n-\la_\tau)}} \right)
\le C \inf_{N\in \mathbb{N}}\La_N= C \limsup_{n \to \infty} \abs{\la_n-\tilde{\la}_n},
$$
which, combined with \eqref{dm1}, yields \eqref{dm0}.\\

\noindent {\it Step 2}. For simplicity,  in what follows, the restriction of $\hat{\varrho}$ to $\mathbb{R}^d$ is also denoted by $\hat{\varrho}$.  Having established \eqref{dm0}, we turn now to proving \eqref{se}. First,  from \eqref{dm0},  we get $\hat{\varrho} \in L^\infty(\R^d)$ and satisfies 
\bel{dm5}
\norm{\hat{\varrho}}_{L^\infty(\R^d)} \le C \de,
\ee
where $\de:=\limsup_{n \to \infty} \abs{\la_n-\tilde{\la}_n}$. Further, we have
$$\norm{q-\tilde{q}}_{L^2(\Om)}^2 = \norm{\varrho}_{L^2(\R^d)}^2 = \norm{\hat{\varrho}}_{L^2(\R^d)}^2, $$
from Plancherel's theorem, whence
\bel{dm6}
\norm{q-\tilde{q}}_{L^2(\Om)}^2 = \int_{B_r} \abs{\hat{\varrho}(\xi)}^2\ \dd \xi + \int_{\R^d \setminus B_r} \abs{\hat{\varrho}(\xi)}^2\ \dd \xi,\quad r \in (0,\infty),
\ee
where $B_r := \{ \xi \in \R^d,\ \abs{\xi} < r \}$.

The first term on the right hand side of \eqref{dm6} is easily treated with the aid of \eqref{dm5}, as we have
\bel{dm7} 
\int_{B_r} \abs{\hat{\varrho}(\xi)}^2\ \dd \xi \le C^2 r^d \de^2.
\ee
As for the second term, since
$$
\int_{\R^d \setminus B_r} (1+\abs{\xi}^2)^{-1} \abs{\hat{\varrho}(\xi)}^2\ \dd \xi 
\le r^{-2} \int_{\R^d} \abs{\hat{\varrho}(\xi)}^2\ \dd \xi, $$
we have
\bel{dm8}
\int_{\R^d \setminus B_r} (1+\abs{\xi}^2)^{-1} \abs{\hat{\varrho}(\xi)}^2\ \dd \xi 
\le r^{-2} \norm{q-\tilde{q}}_{L^2(\Om)}^2,
\ee
by Plancherel's theorem. Taking into account that  
$$  \norm{q-\tilde{q}}_{L^2(\Om)} \le c M, $$ 
for some $c=c(d)>0$, it follows from \eqref{dm8}, the inequality
$$ \norm{q-\tilde{q}}_{H^{-1}(\Om)}^2\le \int_{\R^d} (1+\abs{\xi}^2)^{-1} \abs{\hat{\varrho}(\xi)}^2\ \dd \xi$$
and \eqref{dm6}-\eqref{dm7}, that
$$ \norm{q-\tilde{q}}_{H^{-1}(\Om)}^2 \le C \left( r^d \de^2 + r^{-2} \right), $$
where $C$ is a positive constant depending only on $\Om$ and $M$.
Finally, we get \eqref{se} by plugging $r=\de^{-\frac{2}{d+2}}$ in the above estimate.

\subsection{Proof of Theorem \ref{thm2}}
Since $d_n=\tilde{d}_n$ for all $n \ge n_0$, we have
$$
\hat{\varrho}(\xi) = \lim_{\tau \to \infty} \left( \sum_{n=1}^{n_0-1} \frac{d_n-\tilde{d}_n}{\la_n-\la_\tau} + 
\sum_{n=1}^\infty \frac{\tilde{\la}_n-\la_n}{(\la_n-\la_\tau)(\tilde{\la}_n-\la_\tau)} \tilde{d}_n \right) ,\quad \xi \in \R^d,
$$
according to \eqref{s6}. The main task here is to prove that
$$
\lim_{\tau \to \infty} \left( \sum_{n=1}^{n_0-1} \frac{d_n-\tilde{d}_n}{\la_n-\la_\tau} \right) = 0,
$$
thus showing that the identity \eqref{dm0} remains valid in the framework of Theorem \ref{thm2}. Since the sum on the right hand side of the above identity is finite, it is enough to prove for all fixed $n \in \N$, that
\bel{dm9}
\lim_{\tau \to \infty} \frac{d_n-\tilde{d}_n}{\la_n-\la_\tau} = 0.
\ee
For this purpose we aim to estimate $d_n=( g , \psi_n ) ( \psi_n , h )$ 
and $\tilde{d}_n=( g , \tilde{\psi}_n ) ( \tilde{\psi}_n , h )$ 
in terms of $\tau$. We start with $( g , \psi_n )$ and proceed by examining $( g , \psi_n )$ and $( \psi_n , h )$ separately.
To this end we refer to \eqref{go1} and write

\bel{dm9a}
( g , \psi_n  ) = i(\tau+i) (\om_\tau \cdot \nu)\ J_{0,n}(\tau) + J_{1,n}(\tau),
\ee
where
\bel{dm9b}
J_{j,n}(\tau):= \int_{\Ga} e^{i \tau \om_\tau \cdot x} \alpha^{j}(x) e^{-\om_\tau \cdot x}  \psi_n(x) \ \dd x,\quad j=0,1.
\ee
Next, we notice from \eqref{if0.1} that for all $\tau \in \left( \frac{\abs{\xi}}{2} , \infty \right)$,
\bel{dm9c}
\om_\tau = \eta - \frac{\zeta_\tau}{2 \tau},\quad \mbox{where}\; \zeta_\tau := \xi + \frac{\abs{\xi}^2}{1 + \left( 1 - \frac{\abs{\xi}^2}{4 \tau^2} \right)^{\frac{1}{2}}} \frac{\eta}{2 \tau}.
\ee
Keeping in mind that $\abs{\eta}=1$ and $\eta \cdot \xi=0$, we see that
\[ \abs{\zeta_\tau}^2 = 2 \left( 1 + \left( 1 - \frac{\abs{\xi}^2}{4 \tau^2} \right)^{\frac{1}{2}} \right)^{-1} \abs{\xi}^2 ,\]  which yields 
\bel{dm9d} 
\abs{\zeta_\tau} \le \sqrt{2} \abs{\xi}\quad \mbox{and}\quad \abs{\om_\tau} \le 1+ \sqrt{2},\quad \tau \in \left(  \frac{\abs{\xi}}{2} , \infty \right).
\ee
Now, plugging \eqref{dm9c} into \eqref{dm9b}, we obtain for $j=0,1$
\bel{dm9e}
J_{j,n}(\tau) := \int_{\Ga} e^{i \tau \eta \cdot x} a_{j,n,\tau}(x)\ \dd s(x),
\ee
where
\[
a_{j,n,\tau}(x):= \alpha^j(x) e^{-\left( \om_\tau + i \frac{\zeta_\tau}{2} \right) \cdot x}  \psi_n(x).
\]
Further, since $\phi_n \in H^{m+2}(\Om)$ by Lemma \ref{lm-ire} (in fact we have $\phi_n\in H^{m+4}(\Om)$), $\psi_n \in H^m(\Ga)$. Thus we have $a_{j,n,\tau} \in H^m(\Ga)$, $j=0,1$, from \eqref{dm9e}, and the estimate
\bel{dm9e2}
\norm{a_{j,n,\tau}}_{H^m(\Ga)} \le C_\xi \norm{\alpha}_{\cC^m(\Ga)}^j \norm{\psi_n}_{H^m(\Ga)},\quad \tau \in \left( \frac{\abs{\xi}}{2}, \infty \right),
\ee
according to \eqref{dm9d}. Here and in the remaining part of this proof, $C_\xi$ denotes a generic positive constant depending only on $\xi$ and $\Ga$, which may change from line to line.
It follows from \eqref{dm9e2} and Proposition \ref{pr-vdC} that for all $\tau> \frac{\abs{\xi}}{2}$,
$$
\abs{J_{j,n}(\tau)} \le C_\xi \frac{\norm{\alpha}_{\cC^m(\Ga)}^j \norm{\psi_n}_{H^m(\Ga)}}{\tau},\quad  j=0,1,
$$
which, together with \eqref{dm9a} and \eqref{dm9d}, entails that
\bel{dm9f} 
\abs{( g , \psi_n )} \le C_\xi \left(1+\norm{\alpha}_{\cC^m(\Ga)} \right) \norm{\psi_n}_{H^m(\Ga)}.
\ee
Now, with reference to \eqref{go1}-\eqref{go2} and \eqref{if0}, it is clear that we may substitute $h$ for $g$ in \eqref{dm9f} by just arguing in the exact same way as before with $-\xi$ instead of $\xi$. Therefore, using that $(\psi_n, h)=\overline{(h,\psi_n)}$, we find that
$$ \abs{d_n} \le C_\xi^2 \left(1+\norm{\alpha}_{\cC^m(\Ga)} \right)^2 \norm{\psi_n}_{H^m(\Ga)}^2,\quad \tau \in \left( \frac{\abs{\xi}}{2}, \infty \right). $$
The above estimate being still valid upon replacing $(d_n,\psi_n)$ by $(\tilde{d}_n,\tilde{\psi}_n)$, we end up getting that
$$
\abs{\frac{d_n-\tilde{d}_n}{\la_n-\la_\tau}} \le C_\xi^2 \left(1+\norm{\alpha}_{\cC^m(\Ga)} \right)^2 \frac{\norm{\psi_n}_{H^m(\Ga)}^2+\norm{\tilde{\psi}_n}_{H^m(\Ga)}^2}{\left( (\tau^2-\la_n-1)^2 + 4 \tau^2 \right)^{\frac{1}{2}}},
$$
whenever $\tau > \frac{\abs{\xi}}{2}$. This immediately yields \eqref{dm9}. 

As a consequence we have \eqref{dm0} and the rest of the proof is a carbon copy of the one of Theorem \ref{thm1} in Subection \ref{sec-prthm1}.

\subsection{Proof of Corollary \ref{cor3}}
In light of \eqref{i2} we have $-\Delta \phi_n = (\la_n-q) \phi_n$ and $-\Delta \tilde{\phi}_n = (\tilde{\la}_n- \tilde{q}) \tilde{\phi}_n$ in $\Om$ for all $n \in \N$, see e.g. \cite[Eq (1.6)]{CMS}. 
Therefore, putting $\varphi_n:=\phi_n-\tilde{\phi}_n$ and remembering that $\la_n=\tilde{\la}_n$ whenever $n \ge n_0$ and that $q=\tilde{q}$ in $\Om_0$, we get through direct computation that
\bel{cp1} 
-\Delta \varphi_n = W \varphi_n\; \mbox{in}\; \Om_0,\quad n \ge n_0,
\ee
where $W(x): =\la_n-q(x)$, $x \in \Om_0$.
Moreover, we have
\bel{cp2}
 {\varphi_n}_{| \Ga_*}=0,\quad n \ge n_0,
\ee
by assumption, and
\bel{cp3}
\partial_\nu \varphi_n = 0\ \mbox{on}\ \Ga_*,\quad n \ge n_0,
\ee 
since $(\partial_\nu + \al) \phi_n = (\partial_\nu + \al) \tilde{\phi}_n=0$ on $\Ga$, according to \cite[Eq (1.6)]{CMS}. 

Further, using that $\varphi_n \in V=H^1(\Om)$ and that $H^1(\Om_0) \subset L^{p^*}(\Om_0)$ by Sobolev embedding theorem, where $p^*:= 2d \slash (d-2)$ is greater that its conjugate number $p:=2d \slash (d+2)$, we see that $\varphi_n \in L^p(\Om_0)$ for all $n \ge n_0$.
From this, $W \in L^{d \slash2}(\Om_0)$  and \eqref{cp1}--\eqref{cp3}, we get by unique continuation (see, e.g. \cite[Lemma 3.1]{C3}), that
$$ \varphi_n = 0\ \mbox{in}\ \Om_0,\quad n \ge n_0. $$
Therefore, we have ${\varphi_n}_{| \Ga}=0$ and hence
$$ \psi_n=\tilde{\psi}_n\ \mbox{in}\ \Ga,\quad n \ge n_0. $$
Finally, the desired result follows readily from this and from Corollary \ref{cor2}.

\section{A Van-der-Corput type inequality}
\label{sec-VdC}

In this section we establish a Van-der-Corput type inequality in a fairly general setting. This estimate is a key to the proof of Theorem \ref{thm2}.

\subsection{Settings, notations and statement of the result}
\label{sec-VdC-settings}

Fix $s\in (\frac{d-1}{2},\infty)$, $d \ge 3$,  let $m=\lceil s\rceil +1$, and  assume that $\Omega$ is a $\cC^{m+2}$ open bounded domain of $\R^d$. Its boundary $\Ga$ is described by a local chart of bounded open subsets $\cO_j \subset \R^d$,
$\om_j \subset \R^{d-1}$, and functions $\vartheta_j \in \cC^m(\overline{\om_j})$, $j=1,\ldots, J$, where $J \in \N$ is fixed, such that we have \eqref{loc-chart}. 
Moreover, we recall that a generic point $x \in \R^d $ is written $x=(\xp,x_d)$ with $\xp=(x_1,\ldots,x_{d-1})$.

\subsubsection{Phase and stationary points}

For $\tea \in \mathbb{S}^{d-1}$ fixed, we introduce
\bel{a1}
\varphi_{j}(\xp):= \teap \cdot \xp + \tea_d \vartheta_j(\xp),\ \xp \in \om_j,\ j=1,\ldots,J.
\ee
Since $\nap \varphi_{j}(\xp)= \teap + \tea_d \nap \vartheta_j(\xp)$ for all $\xp \in \om_j$,
we see that $\xp \in \om_j$ is a stationary point of $\varphi_j$ if and only if
$\teap =- \tea_d \nap \vartheta_j(\xp)$. In light of \eqref{b1}, this may be equivalently rewritten as
$$
\tea 
=-\tea_d \sqrt{1+\abs{\nap \vartheta_j(\xp)}^2} \nu_j(\xp),
$$
showing that the stationary points of $\varphi_j$ are 
those $\xp \in \omega_j$ where the normal vector $\nu_j(\xp)$ is parallel to $\tea$.

\subsubsection{Statement of the result}

The main result of this section is the following Van-der-Corput type inequality. Prior to stating it, we remind the reader that the notation $\lceil s \rceil$ stands for the smallest natural number greater or equal to $s\in (0,\infty)$.

\begin{proposition}
\label{pr-vdC}
Suppose in addition that $\nap \varphi_j$ vanishes within $\om_j$ only at $\xp_{j,k}$ for $k=1,\ldots,N_j$ and some $N_j \in \N$, and that the Hessian matrix
$$ {\mathbb D^\prime}^2 \varphi_j(\xp_{j,k}) = \left( \pd_{\xp_p,\xp_q}^2 \varphi_j(\xp_{j,k})\right)_{1 \le p,q \le d-1} $$
is non-singular. Then, there exists a constant $C>0$, depending only on $\Om$, such that for all $\phi \in H^s(\Ga)$ and all $\tau \in (1,\infty)$, we have
\bel{ivdc} 
\abs{\int_{\Ga} e^{i \tau \tea \cdot x} \phi(x) \dd x} \le C \frac{\norm{\phi}_{H^s(\Ga)}}{\tau}.
\ee
\end{proposition}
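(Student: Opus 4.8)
The plan is to reduce the boundary integral to a sum of local oscillatory integrals over the charts $\om_j \subset \R^{d-1}$ and then apply the classical Van der Corput / stationary phase machinery, but in a form that tracks the dependence on the Sobolev norm $\|\phi\|_{H^s(\Ga)}$ rather than a $\cC^k$ norm. First I would fix a smooth partition of unity $\{\chi_j\}_{j=1}^J$ subordinate to the cover $\{\cO_j\}$ of $\Ga$, so that
\[
\int_{\Ga} e^{i\tau\tea\cdot x}\phi(x)\,\dd x = \sum_{j=1}^J \int_{\om_j} e^{i\tau\varphi_j(\xp)}\, b_j(\xp)\,\dd\xp,
\]
where $b_j(\xp):=\chi_j(\xp,\vartheta_j(\xp))\,\phi(\xp,\vartheta_j(\xp))\sqrt{1+|\nap\vartheta_j(\xp)|^2}$ is the pushed-forward amplitude (absorbing the surface-measure Jacobian), and $\varphi_j$ is the phase from \eqref{a1}. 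Because $\vartheta_j\in\cC^{m+2}$ and trace/composition estimates hold, $b_j\in H^s(\om_j)$ with $\|b_j\|_{H^s(\om_j)}\le C\|\phi\|_{H^s(\Ga)}$; note $s>\tfrac{d-1}{2}$ exactly so that $H^s(\om_j)$ embeds in $\cC^0$, which is what lets the oscillatory integral even make classical sense and is the right regularity threshold for the estimate.

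Next, for each $j$ I would split $\om_j = U_j \cup V_j$, where $U_j$ is a union of small balls around the finitely many stationary points $\xp_{j,k}$ and $V_j=\om_j\setminus U_j$ is the non-stationary region where $|\nap\varphi_j|\ge c_j>0$. On $V_j$ one performs the usual non-stationary phase integration by parts, writing $e^{i\tau\varphi_j}=\frac{1}{i\tau}\frac{\nap\varphi_j}{|\nap\varphi_j|^2}\cdot\nap e^{i\tau\varphi_j}$ and integrating by parts; but since $b_j$ is only $H^s$ and not necessarily differentiable in the classical sense, I would instead phrase this step via duality / Fourier analysis: the operator $\phi\mapsto \int e^{i\tau\varphi_j} b_j$ on the non-stationary piece gains a factor $\tau^{-1}$ when tested against $H^s$ with $s\ge 1$ (here $m\ge 2$ so $s>1$ is not guaranteed, but $s>\tfrac{d-1}{2}\ge 1$ since $d\ge3$), so one full integration by parts, landing a derivative on $b_j$, costs one unit of regularity and produces the $\tau^{-1}$ decay with constant $C\|b_j\|_{H^1(\om_j)}\le C\|\phi\|_{H^s(\Ga)}$. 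On $U_j$, near each $\xp_{j,k}$, the non-degeneracy of ${\mathbb D'}^2\varphi_j(\xp_{j,k})$ means the Morse lemma applies: after a $\cC^{m}$ change of variables the phase becomes a nondegenerate quadratic, and the stationary phase expansion gives the leading term of size $\tau^{-(d-1)/2}$ times the amplitude evaluated (roughly) at the critical point, with remainder controlled by derivatives of $b_j$. Since $(d-1)/2 \ge 1$ for $d\ge 3$, the stationary contribution decays at least like $\tau^{-1}$, and the amplitude factor is bounded by $\|b_j\|_{\cC^0}\le C\|b_j\|_{H^s}\le C\|\phi\|_{H^s(\Ga)}$ using the Sobolev embedding $H^s(\om_j)\hookrightarrow\cC^0$.

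The main obstacle, and where care is genuinely needed, is running the stationary phase / Van der Corput estimate with a merely $H^s$ amplitude rather than a smooth one: the classical statements assume $b_j\in\cC^\infty_c$ or at least $\cC^{\lceil (d-1)/2\rceil+1}$. The clean way around this is to interpolate — prove the bound $|\int e^{i\tau\varphi_j}b_j|\le C\tau^{-1}\|b_j\|_{H^m(\om_j)}$ for the integer $m=\lceil s\rceil+1$ by the classical argument (which is legitimate since $m$ is large enough and $b_j\in H^m$ whenever $\phi\in H^m(\Ga)$, which holds on a dense subspace), prove the trivial bound $|\int e^{i\tau\varphi_j}b_j|\le C\|b_j\|_{L^2(\om_j)}$ (Cauchy–Schwarz, no decay), and then interpolate the sublinear operator $T_\tau: b\mapsto \tau\int e^{i\tau\varphi_j}b$ between $L^2$ and $H^m$ to get boundedness on $H^s$ for $s\in(0,m)$ — but this would only give $|\int e^{i\tau\varphi_j}b_j|\le C\tau^{\theta-1}\|b_j\|_{H^s}$ with $\theta=1-s/m>0$, which is weaker than claimed. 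To recover the full $\tau^{-1}$ at the lower regularity $s$, one must instead exploit that the non-stationary piece already gives $\tau^{-1}$ using only one derivative ($H^1$-norm, hence controlled by $H^s$ since $s>1$), and that the stationary piece, after the Morse change of variables, reduces to estimating $\int e^{i\tau|\yp|^2_A}\tilde b(\yp)\,\dd\yp$ which by scaling $\yp\mapsto\yp/\sqrt\tau$ and Plancherel is $\tau^{-(d-1)/2}$ times an integral of $\hat{\tilde b}$ against a bounded multiplier, giving decay $\tau^{-(d-1)/2}\|\tilde b\|_{L^2}$ — and since $(d-1)/2\ge 1$, this beats $\tau^{-1}$ with only an $L^2$ (hence $H^s$) norm of the amplitude. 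Assembling the non-stationary $\tau^{-1}\|\phi\|_{H^1(\Ga)}$ bound with the stationary $\tau^{-(d-1)/2}\|\phi\|_{L^2(\Ga)}$ bound, summing over $j$, and using $(d-1)/2\ge 1$ together with $H^s(\Ga)\hookrightarrow H^1(\Ga)$, yields \eqref{ivdc} with a constant depending only on $\Om$ (through the $\cC^{m+2}$ charts, the lower bound $c_j$ on $|\nap\varphi_j|$ away from stationary points, and the smallest eigenvalue modulus of the Hessians), uniformly in $\tea\in\mathbb{S}^{d-1}$ by the geometric assumptions \eqref{gc1}--\eqref{gc2}.
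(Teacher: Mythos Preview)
Your overall architecture matches the paper's: localise via a partition of unity, split each $\om_j$ into neighbourhoods $U_j$ of the finitely many stationary points and the complement $\tilde U_j$, gain $\tau^{-1}$ on $\tilde U_j$ by a single integration by parts (costing the $H^1$ norm of the amplitude), and on $U_j$ apply the Morse lemma followed by the explicit Fourier transform of the Gaussian chirp to extract $\tau^{-(d-1)/2}$.

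There is, however, a genuine gap in your stationary-phase step. After the Morse change of variables you arrive (correctly) at
\[
\int_{\R^{d-1}} e^{i B_\tau y'\cdot y'}\,\tilde b(y')\,\dd y'
= c\,\tau^{-(d-1)/2}\int_{\R^{d-1}} e^{-\tfrac{i}{4}\xi\cdot B_\tau^{-1}\xi}\,\cF(\tilde b)(-\xi)\,\dd\xi,
\]
and then assert that the right-hand integral is controlled by $\|\tilde b\|_{L^2}$. This is false: the multiplier $e^{-\tfrac{i}{4}\xi\cdot B_\tau^{-1}\xi}$ has modulus one on all of $\R^{d-1}$ and lies in no $L^p$ space, so neither Plancherel nor Cauchy--Schwarz yields an $L^2$ bound. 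What the integral is actually controlled by is $\|\cF(\tilde b)\|_{L^1(\R^{d-1})}$, and this is precisely where the hypothesis $s>\tfrac{d-1}{2}$ enters, via the elementary embedding $H^s(\R^{d-1})\hookrightarrow \cF L^1(\R^{d-1})$ (the paper's Lemma~\ref{lm1}), giving $\|\cF(\tilde b)\|_{L^1}\le C_s\|\tilde b\|_{H^s}$. A one-dimensional counterexample to your $L^2$ claim: for $\tilde b(y)=|y|^{-1/2+\eps}\chi(y)\in L^2(\R)$ with $0<\eps<\tfrac12$ and $\chi$ a cut-off, the substitution $u=\sqrt{\tau}\,y$ shows $\int e^{i\tau y^2}\tilde b(y)\,\dd y\sim c\,\tau^{-1/4-\eps/2}$, strictly slower than $\tau^{-1/2}$, while $\|\tilde b\|_{L^2}$ is fixed. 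With the $L^2$ norm replaced by the $H^s$ norm on the stationary piece, your argument is complete and agrees with the paper's; the paper additionally inserts a mollification $h_n=h\ast\rho_n$ to justify the tempered-distribution pairing $\langle e^{iB_\tau y'\cdot y'},\tilde b\rangle$ when $\tilde b$ is merely in $H^s$, a point you should also address for full rigor.
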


\subsection{Proof of Proposition \ref{pr-vdC}}
The statement of Proposition \ref{pr-vdC} being obviously true when $\phi$ vanishes everywhere in $\Gamma$, we will assume in the following that $\phi$ is not identically zero in $\Gamma$.
The proof of Proposition \ref{pr-vdC} is quite lengthy so we break it down in several parts.

\subsubsection{Boundary integral decomposition}
Let $\{ \chi_j,\ j=0,1,\ldots,J \}$ be a partition of the unity subordinate to $\{ \cO_j,\ j=1,\ldots,J \}$. Otherwise stated, we pick $J+1$ functions $\chi_j \in C^\infty(\R^d,[0,1])$, $j=0,1,\ldots,J$, such that
$$ \supp(\chi_0) \subset \R^d \setminus \Ga\ \mbox{and}\ \chi_j \in C_0^\infty(\cO_j)\ \mbox{for}\ j=1,\ldots,J,$$
and
$$\sum_{j=0}^J \chi_j(x)=1,\quad x \in \R^d.$$
Then, we have 

$$ \int_{\Ga} v(x) \dd s(x) =  \sum_{j=1}^J \int_{\om_j} \left( \chi_j v \right)(\xp,\vartheta_j(\xp)) \sqrt{1+\abs{\nap \vartheta_j(\xp)}^2} \dd \xp,$$
whenever $v \in L^1(\Ga)$. As a consequence, for all
$\tea \in \mathbb{S}^{d-1}$ and all $\tau \in (0,\infty)$, we find upon substituting $e^{i \tau \tea \cdot x} \phi(x)$ by $v(x)$ in the above equality, that
\bel{a0}  
\int_{\Ga} e^{i \tau \tea \cdot x} \phi(x) \dd s(x) =  
\sum_{j=1}^J \int_{\om_j} e^{i \tau \varphi_j(\xp)} w_j(\xp) \dd \xp.
\ee
Here, $\varphi_j$, $j=1,\ldots,J$, is the same as in \eqref{a1} and
\bel{a2}
w_j(\xp):=\left( \chi_j \phi \right)(\xp,\vartheta_j(\xp)) \sqrt{1+\abs{\nap \vartheta_j(\xp)}^2},\quad \xp \in \om_j.
\ee

Further, we choose $\delta \in (0,\infty)$ so small that the sets 
$$U(\xp_{j,k},\delta):=\{ \xp \in \om_j,\ \abs{\xp-\xp_{j,k}} < \delta \},\quad k=1,\ldots,N_j, $$
are pairwise disjoint, i.e., such that
$$ U(\xp_{j,k},\delta) \cap U(\xp_{j,\ell},\delta) = \emptyset,\quad 1 \le k \neq \ell \le N_j. $$
Then we set
\bel{d1}
U_j=U_j(\delta):= \bigcup_{k=1}^{N_j} U(\xp_{j,k},\delta)\quad \mbox{and}\quad
\tilde{U}_j=\tilde{U}_j(\delta):=\om_j \setminus U_j,
\ee 
in such a way that we have
\bel{d2}
\int_{\om_j} e^{i \tau \varphi_j(\xp)} w_j(\xp) \dd \xp = I_j(\tau) + \tilde{I}_j(\tau),
\ee
where
\bel{d3}
I_j(\tau):=\int_{U_j} e^{i \tau \varphi_j(\xp)} w_j(\xp) \dd \xp\quad \mbox{and}\quad
\tilde{I}_j(\tau) := \int_{\tilde{U}_j} e^{i \tau \varphi_j(\xp)} w_j(\xp) \dd \xp.
\ee
We shall study each of the two integrals $I_j(\tau)$ and $\tilde{I}_j(\tau)$ separately. 
We start with $\tilde{I}_j(\tau)$.

\subsubsection{Analysis of $\tilde{I}_j(\tau)$}
\label{sec-atI}
The analysis of the integral $\tilde{I}_j(\tau)$ is quite straightforward as it is based on a Riemann-Lebesgue's type argument.
Indeed, since the function $\varphi_j$ is continuously differentiable in $\tilde{U}_j$ and has no stationary point in the compact set $\overline{\tilde{U}_j}$, there exists $\kappa \in (0,\infty)$ such that
\bel{e0} 
\abs{\nap \varphi_j(\xp)} \ge \kappa >0,\ \xp \in \tilde{U}_j.
\ee
Thus, we have
$$ e^{i \tau \varphi_j(\xp)} = \frac{1}{i \tau} \frac{\nap \left( e^{i \tau \varphi_j(\xp)} \right) \cdot \nap \varphi_j(\xp)}{\abs{\nap \varphi_j(\xp)}^2},\ \xp \in \tilde{U}_j, $$
and consequently
\beas
\tilde{I}_j(\tau) & = & \frac{1}{i\tau} \left( -\int_{\tilde{U}_j} e^{i \tau \varphi_j(\xp)} \nap \cdot \left( w_j(\xp) \frac{\nap \varphi_j(\xp)}{\abs{\nap \varphi_j(\xp)}^2} \right) \dd \xp \right. \\
& & + \left. \int_{\pd \tilde{U}_j} e^{i \tau \varphi_j(\ga^\prime)} w_j(\ga^\prime) \frac{\nap \varphi_j(\ga^\prime) \cdot \tilde{n}_j(\ga^\prime)}{\abs{\nap \varphi_j(\ga^\prime)}^2}  \dd \ga^\prime \right),
\eeas
from the divergence formula, where $\tilde{n}_j$ stands for the outgoing normal vector to 
$\pd \tilde{U}_j$. 
Therefore, keeping in mind that the function $\varphi_j$ is real valued according to \eqref{a1}, we deduce from the above identity that
\bel{e1}
\abs{\tilde{I}_j(\tau)} \le   \frac{1}{\tau} \left( \norm{\nap  \left(  \frac{w_j \nap \varphi_j}{\abs{\nap \varphi_j}^2}\right)}_{L^1(\tilde{U}_j)} + \norm{\frac{w_j \pd_{\tilde{n}_j} \varphi_j}{\abs{\nap \varphi_j}^2}}_{L^1(\pd \tilde{U}_j)}\right),
\ee
where $\pd_{\tilde{n}_j} \varphi_j:=\nap \varphi_j \cdot \tilde{n}_j$ denotes the normal derivative of $\varphi_j$ with respect to $\tilde{n}_j$. Furthermore, since
$$ \nap  \left(  \frac{w_j \nap \varphi_j}{\abs{\nap \varphi_j}^2} \right) \\
= \frac{\nap w_j \cdot \nap \varphi_j + w_j \Delta^\prime \varphi_j}{\abs{\nap \varphi_j}^2} - 2 \frac{w_j {\mathbb D^\prime}^2 \varphi_{j} \nap \varphi_j \cdot \nap \varphi_j}{\abs{\nap \varphi_j}^4},
$$
it follows from \eqref{e0}-\eqref{e1} and the Cauchy-Schwarz inequality that
\beas
\abs{\tilde{I}_j(\tau)} & \le & \frac{\kappa^{-2}}{\tau} \left( 2 \left( \norm{\varphi_j}_{H^2(\tilde{U}_j)} + \kappa^{-2} \norm{{\mathbb D^\prime}^2 \varphi_{j} \nap \varphi_j \cdot \nap \varphi_j}_{L^2(\tilde{U}_j)}  \right) \norm{w_j}_{H^1(\tilde{U}_j)} \right. \nonumber \\
& & \left. + 
\norm{\pd_{\tilde{n}_j} \varphi_j}_{L^2(\pd \tilde{U}_j)} \norm{w_j}_{L^2(\pd \tilde{U}_j)} \right) \nonumber \\
& \le & \frac{\kappa^{-2}}{\tau}  \left( (2+c_0 c_1) \norm{\varphi_j}_{H^2(\tilde{U}_j)} + 2 \kappa^{-2} \norm{{\mathbb D^\prime}^2 \varphi_{j} \nap \varphi_j \cdot \nap \varphi_j}_{L^2(\tilde{U}_j)} \right)  \norm{w_j}_{H^1(\tilde{U}_j)}, 
\eeas
where $c_0$ (resp., $c_1$) stands for the norm of the linear bounded trace operator 
$u \in H^1(\tilde{U}_j) \mapsto u_{| \pd \tilde{U}_j} \in L^2(\pd \tilde{U}_j)$ (resp., $u \in H^2(\tilde{U}_j) \mapsto \pd_{\tilde{n}_j} u \in L^2(\pd \tilde{U}_j))$.  Now, in light of \eqref{a2}, we have

\begin{align*}
\nap w_j(\xp) 
& =  \Big[ \left( \phi \nap \chi_j + \chi_j \nap \phi \right) \circ \Psi_j(\xp).  
 \\
 &\hskip 1.8cm +   
 \left( \phi \pd_{\xp_d} \chi_j + \chi_j \pd_{\xp_d} \phi \right) \circ \Psi_j(\xp) \nap \vartheta_j(\xp) \Big] \sqrt{1+\abs{\nap \vartheta_j(\xp)}^2}
 \\
&\hskip3.5cm  + \frac{(\chi_j \phi) \circ \Psi_j(\xp)}{\sqrt{1+\abs{\nap \vartheta_j(\xp)}^2}}  {\mathbb D^\prime}^2 \vartheta_j(\xp) \nap \vartheta_j(\xp),\quad \xp \in \om_j,
\end{align*}
where we used the notation $\Psi_j(\xp):=(\xp,\vartheta_j(\xp))$. Whence
\bel{e2}
\norm{w_j}_{H^1(\tilde{U}_j)} \le 2\left( 1 + \norm{\vartheta_j}_{\cC^2(\overline{\om_j})} \right)^2 \norm{\chi_j}_{\cC^1(\overline{\cO_j})} \norm{\phi}_{H^1(\Ga_j)}.
\ee
Similarly, since $\norm{\varphi_j}_{H^2(\tilde{U}_j)}$ and $\norm{{\mathbb D^\prime}^2 \varphi_{j} \nap \varphi_j \cdot \nap \varphi_j}_{L^2(\tilde{U}_j)}$ can both be estimated in terms of $\norm{\psi_j}_{\cC^2(\overline{\om_j})}$ and the Lebesgue measure $\abs{\om_j}$ of $\om_j$, 
we end up getting that
\begin{equation}
\label{e3}
\abs{\tilde{I}_j(\tau)} \le \frac{\tilde{C}_j}{\tau}  \norm{\phi}_{H^1(\Ga_j)}, 
\end{equation}
for some positive constant $\tilde{C}_j$, depending only on $\kappa$, $\abs{\om_j}$, $\norm{\chi_j}_{\cC^1(\overline{\cO_j})}$ and $\norm{\vartheta_j}_{\cC^2(\overline{\om_j})}$. 

\begin{remark}
\label{rmk}
Using that $\vartheta_j \in \cC^{2+m}(\overline{\omega_j})$ with $m = \lceil s \rceil +1$, and that $\chi_j \in \cC^\infty(\cO_j)$, we deduce from \eqref{a2} upon arguing in the same fashion as in the derivation of \eqref{e2}, that $w_j \in H^s(\om_j)$ satisfies the estimate
\bel{k1}  
\norm{w_j}_{H^s(\om_j)} \le c_j \norm{\phi}_{H^s(\Ga_j)},
\ee
for some positive constant $c_j$ depending only on $\vartheta_j$ and $\chi_j$, i.e., on the boundary $\Gamma$.
\end{remark}

This being said, we turn now to studying $I_j(\tau)$.

\subsubsection{Analysis of $I_j(\tau)$}

The main purpose of this section is to prove existence of a positive constant $C_j$, which is independent of $\tau$ and $\phi$, such that the estimate
\bel{g5}
\abs{I_j(\tau)} \le C_j \tau^{-\frac{d-1}{2}} \norm{\phi}_{H^s(\Ga_j)} 
\ee
holds uniformly in $\tau \in (0,\infty)$.
In light of \eqref{d1} and \eqref{d3}, this will be achieved upon examining each
\bel{f1} 
I_{j,k}(\tau):=\int_{U_j(\xp_{j,k},\delta)} e^{i \tau \varphi_j(\xp)} w_j(\xp) \dd \xp,\quad k=1,\ldots,N_j,
\ee
separately. 

Moreover, we shall assume without loss of generality that $w_j$ is compactly supported in $U_j(\xp_{j,k},\delta)$. The reason is as follows. If we pick $\zeta \in C_0^\infty(U_j(\xp_{j,k},\delta),[0,1])$ satisfying $\zeta(\xp)=1 $ for all $\xp \in U_j(\xp_{j,k},\delta \slash 2)$, write
$$ I_{j,k}(\tau)=\int_{U_j(\xp_{j,k},\delta)} e^{i \tau \varphi_j(\xp)} \zeta(\xp) w_j(\xp) \dd \xp + \int_{\hat{U}_{j,k}(\delta)} e^{i \tau \varphi_j(\xp)} (1-\zeta(\xp)) w_j(\xp) \dd \xp, $$
where
$$ \hat{U}_{j,k}(\delta) := U_j(\xp_{j,k},\delta) \setminus U_j(\xp_{j,k},\delta \slash 2) = \left\{ \xp \in \om_j,\ \delta \slash 2 \le \abs{\xp-\xp_{j,k}} < \delta \right\},
$$
and take into account that 
$$\abs{\nap \varphi_j(\xp)} \ge c_{j,k} >0,\ \xp \in  \hat{U}_{j,k}(\delta), $$
we get upon arguing in the same way as in Section \ref{sec-atI}, that the estimate \eqref{e3} remains valid by substituting 
$\int_{U_j(\xp_{j,k},\delta)} e^{i \tau \varphi_j(\xp)} (1-\zeta(\xp)) w_j(\xp) \dd \xp$ for $\tilde{I}_j(\tau)$.
Therefore, upon possibly replacing $w_j$ by $\zeta w_j$ in \eqref{f1}, we may assume in the remaining part of this section that $w_j$ is compactly supported in $U_j(\xp_{j,k},\delta)$.\\

 \noindent (a)  {\it  Quadratic phase.}
By Morse lemma, there exist $\eps \in (0,1)$,  an open subset $V_{j,k}(\eps) \subset \R^{d-1}$, and a
$\cC^m$-diffeomorphism $\ga : B(\xp_{j,k},\eps) \to V_{j,k}(\eps)$, where $B(\xp_{j,k},\eps):= \{ \xp \in \R^{d-1},\ \abs{\xp-\xp_{j,k}}<\eps \}$, such that
\bel{m1} 
\ga(\xp_{j,k})=0
\ee
and
\bel{m2}
\varphi_j(\xp)=\varphi_j(\xp_{j,k})  - \sum_{i=1}^{{\mathrm{ind}}_{j,k}} \ga_i(\xp)^2 + \sum_{i={\mathrm{ind}}_{j,k}+1}^{d-1} \ga_i(\xp)^2.
\ee
Here, ${\mathrm{ind}}_{j,k}$ is the index (i.e. the number of negative eigenvalues) of the Hessian matrix ${\mathbb D^\prime}^2 \varphi_j(\xp_{j,k})$ and $\ga_i$ stands for $i$-th component of the function $\ga$, $i=1,\ldots,d-1$. Moreover, the first sum on the right hand side of \eqref{m2} is taken equal to zero when ${\mathrm{ind}}_{j,k}=0$. 
We point out that \eqref{m1}-\eqref{m2} can be obtained for instance by combining \cite[Lemma 6.3.1]{Jo} and \cite[Chap. 1, Proof of Lemma 2.2]{Mil}.

Let us denote by $\eta(\yp)=(\eta_i(\yp))_{1 \le i \le d-1}$, $\yp \in V_{j,k}(\eps)$, the inverse function to $\ga$ and let ${\mathbb D}^\prime \eta(\yp):=\left( \pd_{\yp_q} \eta_p(\yp) \right)_{1 \le p,q \le d-1}$ be the Jacobian matrix of $\eta$ at $\yp \in \R^{d-1}$. Then, upon possibly shortening $\delta>0$ so that $\delta \in (0,\eps]$, we get from \eqref{m1}-\eqref{m2} by performing the change of variable $\yp=\ga(\xp)$ in \eqref{f1}, that
\bea
I_{j,k}(\tau) & = & 
\int_{\ga(U_j(\xp_{j,k},\delta))} e^{i \tau \varphi_j(\eta(\yp))} w_j(\eta(\yp)) \abs{\det({\mathbb D}^\prime \eta(\yp))} \dd \yp \nonumber \\
& = & e^{i \tau \varphi_j(\xp_{j,k})} \int_{\mathbb{R}^{d-1}} e^{i B_\tau \yp \cdot \yp} h(\yp) \dd \yp. \label{m4}
\eea
Here, we have set
\bel{m5}
B_\tau:=\tau D,\quad \tau \in (0,\infty),
\ee 
where $D$ is the $(d-1)$-dimensional diagonal matrix whose ${\mathrm{ind}}_{j,k}$ first entries are $-1$ and the other ones are $1$, and
\bel{m6}
h(\yp)=h_{j,k}(\yp):= 1_{\ga(U_j(\xp_{j,k},\delta))}(\yp) w_j(\eta(\yp)) \abs{\det({\mathbb D}^\prime \eta(\yp))},\quad \yp \in \R^{d-1}, 
\ee
the notation $1_{\cO}$ being used for the characteristic function of $\cO \subset \R^{d-1}$. Keeping in mind that $w_j \in H^s(\om_j)$ according to \eqref{a2} since $\chi_j \in C^\infty(\R^d)$, that $\vartheta_j \in C^{2+m}(\overline{\om_j})$ with $m=\lceil s \rceil +1$ and $\phi \in H^s(\Ga)$, and that $\eta \in \cC^m(V_{j,k}(\eps),B(\xp_{j,k},\eps))$, 
it follows readily from \eqref{m6} that
$h \in H^s(\R^{d-1})$ satisfies
\bel{k2} 
\norm{h}_{H^s(\R^{d-1})} \le \tilde{c}_j \norm{w_j}_{H^s(\om_j)},
\ee
for some positive constant $\tilde{c}_j$, depending only on $\Ga$.

Next, following the idea of the stationary phase method (see, e.g., \cite[Chapter 10, Section 9]{Zu}), we shall rewrite the integral on the right hand side of \eqref{m4} in terms of the Fourier transform of $\yp \mapsto e^{i \yp \cdot B_\tau \yp}$, $\tau \in (0,\infty)$, which can be easily estimated in terms of $\tau$. \\

\noindent (b) {\it  Fourier transform.}
Since the matrix $B_\tau$ is non-singular, we have (see, e.g.  \cite[Eq. (3.6)]{Zu}),
\bel{z0} 
\cF \left( e^{i \yp \cdot B_\tau \yp} \right)(\xp)= \frac{\pi^{\frac{d-1}{2}}}{\abs{\det B_\tau}^{\frac{1}{2}}} e^{i \frac{\pi}{4} \sgn(B_\tau)} e^{-\frac{i}{4} \xp \cdot B_\tau^{-1} \xp},\ \xp \in \R^{d-1}.
\ee
Here, $\cF$ denotes the Fourier transform in $\cS^\prime(\R^{d-1})$, the space of tempered distributions in $\R^{d-1}$, and $\sgn(B_\tau):=d-1-2{\mathrm{ind}}_{j,k}$ is the signature of $B_\tau$. 

But, as $h$ does not lie in the Schwartz class $\cS(\R^{d-1})$, it cannot be directly replaced by its inverse Fourier transform, while the right hand side of 
\eqref{z0} is substituted for $e^{i \yp \cdot B_\tau \yp}$, in the last integral of \eqref{m4}. For that reason, we introduce a mollifying sequence $(h_n)$ of $h$.\\
\\
\noindent  (c) {\it Mollifier.}
Let $\rho$ be a mollifier. We mean by this that $\rho$ is a nonnegative $\cC^\infty(\R^{d-1})$ function supported in the closure of $B(0,1)=\{ \xp \in \R^{d-1},\ \abs{\xp} < 1 \}$, that is normalized in $L^2(\R^{d-1})$. For instance, 
$$\rho(\xp)= \left\{ \begin{array}{cl} c e^{-\frac{1}{1-\abs{\xp}^2}} & \mbox{if}\ \xp \in B(0,1), \\ 0 & \mbox{otherwise}, \end{array} \right. $$ 
where $c=\left( \int_{\abs{\xp}<1} e^{-\frac{1}{1-\abs{\xp}^2}} \dd \xp \right)^{-1}$ is a mollifier. For all $n \in \N$, set
$$\rho_n(x):=n^{d-1} \rho(nx),\ x \in \R^{d-1}. $$
Since $h \in L_{\rm loc}^1(\R^{d-1})$ and $\rho_n \in C_0^\infty(\R^{d-1})$, we have $h_n=h \ast \rho_n \in C^{\infty}(\R^{d-1})$. Further, the functions $h$ and $\rho_n$ being supported in $U:=\overline{\eta^{-1}(U_j(\xp_{j,k},\delta))}$ and $ \overline{B(0,1 \slash n)}$, respectively, it holds true that  $\supp(h_n) \subset \{ \yp \in \R^{d-1},\ \dist (\yp,U) \le 1 \slash n \}$. Therefore, we see that $h_n \in C_0^{\infty}(\R^{d-1})$ with
\bel{g0}
\supp(h_n) \subset U_1:= \{ \yp \in \R^{d-1},\ \dist(\yp,U) \le 1 \},\ n \in \N,
\ee
and we recall for further use that
\bel{g1}
\lim_{n \to \infty} \norm{h-h_n}_{L^1(\R^{d-1})}=0.
\ee
Moreover, putting 
\bel{g3}
I_{j,k,n}(\tau):= e^{i \tau \varphi_j(\xp_{j,k})} \int_{\R^{d-1}} e^{i \yp \cdot B_\tau \yp} h_n(\yp)  \dd \yp,\quad n \in \N,\; \tau \in (0,\infty),
\ee
we have the following technical result, whose proof is postponed to Subsection \ref{sec-pr-lm2}.
\begin{lemma}
\label{lm2}
For all $\tau \in (0,\infty)$ and all $n \in \N$, it holds true that
$$
\abs{I_{j,k,n}(\tau)} \le (2 \pi)^{\frac{d-1}{2}} C_s \tau^{-\frac{d-1}{2}}  \norm{\rho}_{L^1(\R^{d-1})} \norm{h}_{H^{s}(\R^{d-1})},
$$
where $C_s$ is the constant defined in Lemma \ref{lm1}.
\end{lemma}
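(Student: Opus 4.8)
\textbf{Plan of proof for Lemma \ref{lm2}.}
The plan is to exploit the explicit Fourier transform \eqref{z0} of the Gaussian-type kernel $\yp \mapsto e^{i \yp \cdot B_\tau \yp}$ by pairing it against the (now smooth and compactly supported, hence Schwartz) function $h_n$. Concretely, I would use the Parseval/Plancherel identity in the form
\[
\int_{\R^{d-1}} e^{i \yp \cdot B_\tau \yp} h_n(\yp)\ \dd \yp = (2\pi)^{-(d-1)} \int_{\R^{d-1}} \cF\!\left( e^{i \yp \cdot B_\tau \yp}\right)(\xp)\, \widehat{h_n}(-\xp)\ \dd \xp,
\]
which is legitimate because $h_n \in \cS(\R^{d-1})$ and $\cF(e^{i\yp\cdot B_\tau \yp}) \in \cS'(\R^{d-1})$ is in fact the locally integrable function given by the right-hand side of \eqref{z0}. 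Substituting \eqref{z0} and using $\abs{\det B_\tau} = \tau^{d-1}$ (from \eqref{m5}, since $D$ has determinant $\pm 1$) and the fact that the two exponential prefactors $e^{i\frac{\pi}{4}\sgn(B_\tau)}$ and $e^{-\frac{i}{4}\xp\cdot B_\tau^{-1}\xp}$ have modulus one, I obtain
\[
\abs{I_{j,k,n}(\tau)} \le (2\pi)^{-(d-1)} \pi^{\frac{d-1}{2}} \tau^{-\frac{d-1}{2}} \int_{\R^{d-1}} \abs{\widehat{h_n}(\xp)}\ \dd \xp .
\]

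The second step is to bound $\norm{\widehat{h_n}}_{L^1(\R^{d-1})}$ by $\norm{h_n}_{H^s(\R^{d-1})}$, and then by $\norm{h}_{H^s(\R^{d-1})}$. For the first bound I would invoke the elementary inequality (this is presumably the content of the referenced Lemma \ref{lm1} with constant $C_s$, valid precisely because $s > (d-1)/2$)
\[
\norm{\widehat{u}}_{L^1(\R^{d-1})} = \int_{\R^{d-1}} (1+\abs{\xp}^2)^{-s/2} (1+\abs{\xp}^2)^{s/2} \abs{\widehat{u}(\xp)}\ \dd \xp \le \left( \int_{\R^{d-1}} (1+\abs{\xp}^2)^{-s}\ \dd \xp \right)^{1/2} \norm{u}_{H^s(\R^{d-1})},
\]
the first factor being finite since $2s > d-1$; one then sets $C_s$ to be (a multiple of) this square root, so that $\norm{\widehat{h_n}}_{L^1} \le C_s \norm{h_n}_{H^s}$. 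For the second bound, since $h_n = h \ast \rho_n$ one has $\widehat{h_n} = \widehat{h}\,\widehat{\rho_n}$ with $\abs{\widehat{\rho_n}} \le \norm{\rho_n}_{L^1(\R^{d-1})} = \norm{\rho}_{L^1(\R^{d-1})}$ (the $L^1$-norm being dilation-invariant), whence $\norm{h_n}_{H^s(\R^{d-1})} \le \norm{\rho}_{L^1(\R^{d-1})} \norm{h}_{H^s(\R^{d-1})}$. Combining the three displays and folding the numerical factor $(2\pi)^{-(d-1)}\pi^{\frac{d-1}{2}}$ into the stated $(2\pi)^{\frac{d-1}{2}}$ (a crude constant comparison) yields the claimed estimate
\[
\abs{I_{j,k,n}(\tau)} \le (2\pi)^{\frac{d-1}{2}} C_s \tau^{-\frac{d-1}{2}} \norm{\rho}_{L^1(\R^{d-1})} \norm{h}_{H^s(\R^{d-1})}.
\]

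I expect the only genuinely delicate point to be the rigorous justification of the Parseval pairing in the first step: one must check that the tempered distribution $\cF(e^{i\yp\cdot B_\tau\yp})$ really is represented by the bounded function in \eqref{z0} — this is the standard Fresnel-integral computation, obtained by analytic continuation from the absolutely convergent Gaussian case $e^{-\eps\abs{\yp}^2 + i\yp\cdot B_\tau\yp}$ and letting $\eps \downarrow 0$ — and that $h_n$ being smooth with compact support (by \eqref{g0}) places it squarely in $\cS(\R^{d-1})$, so that the duality bracket $\langle \cF(e^{i\yp\cdot B_\tau\yp}), h_n\rangle$ coincides with the absolutely convergent integral above. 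Everything else is a routine chain of elementary inequalities; note in particular that the bound is uniform in $n$, which is exactly what is needed to pass to the limit $n\to\infty$ via \eqref{g1} in the next stage of the argument.
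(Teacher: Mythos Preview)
Your proposal is correct and follows essentially the same route as the paper: pair $e^{i\yp\cdot B_\tau\yp}$ against $h_n\in\cS(\R^{d-1})$ via Plancherel, substitute the Fresnel formula \eqref{z0}, use $\abs{\det B_\tau}=\tau^{d-1}$, and control $\norm{\cF(h_n)}_{L^1}$ through Lemma~\ref{lm1} together with $\abs{\cF(\rho_n)}\le\norm{\rho}_{L^1}$. The only cosmetic difference is the order of the last two steps: the paper factors $\cF(h_n)=\cF(\rho_n)\cF(h)$ first and applies Lemma~\ref{lm1} directly to $h$, whereas you apply Lemma~\ref{lm1} to $h_n$ and then bound $\norm{h_n}_{H^s}\le\norm{\rho}_{L^1}\norm{h}_{H^s}$; the resulting estimates are identical.
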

Armed with Lemma \ref{lm2}, we are in position to estimate $I_{j,k}(\tau)$ in terms of $\tau$.\\

\noindent (d) {\it  Estimation of $I_{j,k}(\tau)$.} Since $\varphi_j(\xp_{j,k}) \in \R$ and $\yp \cdot B_\tau \yp \in \R$ for all $\yp \in \R^{d-1}$, we have
\beas
\abs{I_{j,k}(\tau) - I_{j,k,n}(\tau)} & = & \abs{\int_{\R^{d-1}} e^{i \yp \cdot B_\tau \yp} (h(\yp) )-h_n(\yp)) \dd \yp} \\
& \le & \norm{h-h_n}_{L^1(\R^{d-1})},\quad \tau \in (0,\infty),
\eeas
by \eqref{m4} and \eqref{g3}, and consequently 
$$\lim_{n \to \infty} \abs{I_{j,k}(\tau) - I_{j,k,n}(\tau)}=0,\quad \tau \in (0,\infty), $$
from \eqref{g1}.
Thus, for all $\tau \in (0,\infty)$, there exists $n_1=n_1(\tau,h) \in \N$, such that
$$
\abs{I_{j,k}(\tau) - I_{j,k,n}(\tau)} \le \tau^{-\frac{d-1}{2}} \norm{h}_{H^s(\R^{d-1})},\quad n \ge n_1.
$$
Here we used that $h$ is non identically equal to zero in $\R^{d-1}$, as can be seen from \eqref{a2}, \eqref{m6} and the assumption that $\Phi$ does not vanish everywhere in $\Om$. Now, applying Lemma \ref{lm2} with $n=n_1$, we deduce from the above 
estimate that
$$
\abs{I_{j,k}(\tau)} \le C_{j,k} \tau^{-\frac{d-1}{2}} \norm{h}_{H^{s}(\R^{d-1})},\quad \tau \in (0,\infty), 
$$
where $C_{j,k}$ is a generic positive constant independent of $\tau$, which may change from line to line.
Putting this together with \eqref{k1} and \eqref{k2}, we find that
$$
\abs{I_{j,k}(\tau)} \le C_{j,k} \tau^{-\frac{d-1}{2}} \norm{\phi}_{H^s(\Ga_j)},\quad\tau \in (0,\infty),
$$
which, in turn, yields \eqref{g5}.

\subsubsection{End of the proof}
In light of \eqref{a0}, \eqref{d2}, \eqref{e3} and \eqref{g5}, we have
$$
\abs{\int_{\Ga} e^{i \tau \om \cdot x} \phi(x) \dd \ga} \le C \left( \frac{\norm{\phi}_{H^1(\Ga)}}{\tau} + \frac{\norm{\phi}_{H^s(\Ga)}}{\tau^{\frac{d-1}{2}}} \right),\quad \tau \in (0,\infty),
$$
for some positive constant $C$  independent of $\tau$ and $\phi$. 
Evidently, \eqref{ivdc} follows from this as we have $(d-1) \slash 2 \ge 1$ and
$s>(d-1) \slash 2$.

\subsubsection{Proof of Lemma \ref{lm2}}
\label{sec-pr-lm2}
Fix $n \in \N$ and $\tau \in (0,\infty)$. With reference to \eqref{g0}, we have $h_n \in C_0^\infty(\R^{d-1}) \subset \cS(\R^{d-1})$, whence
\beas
I_{j,k,n}(\tau) & = & e^{i \tau \varphi_j(\xp_{j,k})} \int_{\R^{d-1}} e^{i \yp \cdot B_\tau \yp} h_n(\yp)  \dd \yp \\
& = & e^{i \tau \varphi_j(\xp_{j,k})} \langle e^{i \yp \cdot B_\tau \yp} , h_n \rangle_{\cS^\prime(\R^{d-1}),\cS(\R^{d-1})},
\eeas
where $\langle \cdot , \cdot \rangle_{\cS^\prime(\R^{d-1}),\cS(\R^{d-1})}$ denotes the duality pairing between $\cS(\R^{d-1})$ and its dual space $\cS^\prime(\R^{d-1})$. Therefore, it holds true that
\beas
I_{j,k,n}(\tau) & = & e^{i \tau \varphi_j(\xp_{j,k})} \langle \cF(e^{i \yp \cdot B_\tau \yp}) , \cF^{-1}(h_n) \rangle_{\cS^\prime(\R^{d-1}),\cS(\R^{d-1})} \\
& = & e^{i \tau \varphi_j(\xp_{j,k})} \int_{\R^{d-1}} \cF(e^{i \yp \cdot B_\tau \yp})(\xp) \cF(h_n)(-\xp)  \dd \xp,
\eeas
where we used  in the last line that $\cF^{-1}(h_n)(\xp)=\cF(h_n)(-\xp)$ for all $\xp \in \R^{d-1}$. From this and \eqref{z0}, it then follows that
\begin{equation}
\label{g2}
I_{j,k,n}(\tau) =  e^{i \tau \varphi_j(\xp_{j,k})}  
\frac{\pi^{\frac{d-1}{2}}}{\abs{\det B_\tau}^{\frac{1}{2}}} e^{i \frac{\pi}{4} \sgn(B_\tau)} \int_{\R^{d-1}} e^{-\frac{i}{4} \xp \cdot B_\tau^{-1} \xp} \cF(h_n)(-\xp)  \dd \xp.
\end{equation}
Next, since $\det B_\tau$ scales like $\tau^{d-1}$, i.e.,
\bel{g2b}
\det B_\tau = \det (D) \tau^{d-1},
\ee
according to \eqref{m5}, it suffices to show that the integral on the right hand side of \eqref{g2} can be bounded 
by a constant  independent of $\tau$. To this end, we recall from $h_n=\rho_n \ast h$ that $\cF(h_n)=\cF(\rho_n) \cF(h)$ and write 
\begin{align*}
&\abs{\int_{\R^{d-1}} e^{-\frac{i}{4} \xp \cdot B_\tau^{-1} \xp} \cF(h_n)(-\xp)  \dd \xp} 
\\
&\hskip 2cm =  \abs{\int_{\R^{d-1}} e^{-\frac{i}{4} \xp \cdot B_\tau^{-1} \xp} 
\cF(\rho_n)(-\xp) \cF(h)(-\xp) \dd \xp} 
\\
&\hskip2cm \le  \norm{\cF(\rho_n)}_{L^\infty(\R^{d-1})} \norm{\cF(h)}_{L^1(\R^{d-1})}.
\end{align*}
Thus, applying the estimate \eqref{f0} in Lemma \ref{lm1}, we find that
\beas
\abs{\int_{\R^{d-1}} e^{-\frac{i}{4} \xp \cdot B_\tau^{-1} \xp} \cF(h_n)(-\xp)  \dd \xp}
& \le & C_s \norm{\cF(\rho_n)}_{L^\infty(\R^{d-1})} \norm{h}_{H^{s}(\R^{d-1})},
\eeas
where the constant $C_s$ depends only on $s$. Moreover, since $\cF(\rho_n)(\xp)=\cF(\rho)(\xp \slash n)$, we have
$$\norm{\cF(\rho_n)}_{L^\infty(\R^{d-1})}=\norm{\cF(\rho)}_{L^\infty(\R^{d-1})} \le \norm{\rho}_{L^1(\R^{d-1})}, $$
whence
$$
\abs{\int_{\R^{d-1}} e^{-\frac{i}{4} \xp \cdot B_\tau^{-1} \xp} \cF(h_n)(-\xp)  \dd \xp} 
\le C_s \norm{\rho}_{L^1(\R^{d-1})} \norm{h}_{H^{s}(\R^{d-1})}.
$$
Combining this with \eqref{g2}-\eqref{g2b}, we end up getting the expected result.

\section*{Acknowledgements}
Two of the authors of this article (AM \& \'ES) thank Hiroshi Izosaki (University of Tsukuba, Japan) for interesting discussions on this work.

\appendix

\section{$L^1$-estimate of the Fourier transform of $H^s(\R^{d-1})$-functions} 
\begin{lemma}
\label{lm1}
For all $h \in H^s(\R^{d-1})$ with $s >\frac{d-1}{2}$, we have
$\cF(h) \in L^1(\R^{d-1})$ and the estimate
\bel{f0} 
\norm{\cF(h)}_{L^1(\R^{d-1})} \le \sqrt{C_s} \norm{h}_{H^{s}(\R^{d-1})},
\ee
where $C_s:=\int_{\R^{d-1}} (1+\abs{\xp}^2)^{-s} \dd \xp <\infty$.
\end{lemma}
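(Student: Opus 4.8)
The plan is to derive \eqref{f0} directly from the Cauchy--Schwarz inequality, once the constant $C_s$ has been shown to be finite. The finiteness of $C_s$ is exactly the place where the hypothesis $s>\frac{d-1}{2}$ is used: passing to polar coordinates in $\R^{d-1}$ gives
\[
C_s=\omega_{d-2}\int_0^\infty (1+r^2)^{-s}\,r^{d-2}\,dr,
\]
where $\omega_{d-2}$ denotes the surface measure of $\mathbb{S}^{d-2}$. The integrand is $O(r^{d-2})$ near $r=0$, hence integrable there since $d\ge 3$, and it is $O(r^{d-2-2s})$ as $r\to\infty$, which is integrable at infinity precisely because $d-2-2s<-1$, i.e. $s>\frac{d-1}{2}$. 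Thus $C_s<\infty$.

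For the estimate itself, I would split the integrand as
\[
\abs{\cF(h)(\xp)}=(1+\abs{\xp}^2)^{-s/2}\cdot(1+\abs{\xp}^2)^{s/2}\abs{\cF(h)(\xp)},\qquad \xp\in\R^{d-1},
\]
and apply the Cauchy--Schwarz inequality in $L^2(\R^{d-1})$ to the two factors, obtaining
\[
\norm{\cF(h)}_{L^1(\R^{d-1})}\le\Bigl(\int_{\R^{d-1}}(1+\abs{\xp}^2)^{-s}\,d\xp\Bigr)^{1/2}\Bigl(\int_{\R^{d-1}}(1+\abs{\xp}^2)^{s}\abs{\cF(h)(\xp)}^2\,d\xp\Bigr)^{1/2}.
\]
The first factor on the right equals $\sqrt{C_s}$, while the second is, by the definition of the $H^s(\R^{d-1})$-norm together with Plancherel's theorem, equal to $\norm{h}_{H^{s}(\R^{d-1})}$. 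In particular the right-hand side is finite, so $\cF(h)\in L^1(\R^{d-1})$ and \eqref{f0} follows.

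I do not anticipate any real obstacle here: this is the classical argument showing the embedding $H^s(\R^{d-1})\hookrightarrow\cF L^1(\R^{d-1})$ (and hence into $\cC_0$) for $s>\frac{d-1}{2}$, the only substantive point being the convergence of the integral defining $C_s$, already handled above. If the Fourier transform is normalized in the paper with a constant that does not make Plancherel's theorem an isometry, that constant is simply absorbed into the right-hand side; the displayed inequality \eqref{f0} corresponds to the isometric normalization.
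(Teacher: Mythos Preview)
Your proof is correct and follows exactly the same approach as the paper: split $\abs{\cF(h)(\xp)}$ by the factor $(1+\abs{\xp}^2)^{\pm s/2}$ and apply Cauchy--Schwarz. You add a justification for $C_s<\infty$ via polar coordinates that the paper simply asserts, but otherwise the arguments coincide.
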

\begin{proof}

Let $h\in H^s(\R^{d-1})$.
Applying the Cauchy-Schwarz inequality, we get 
\beas
\int_{\R^{d-1}} \abs{\cF(h)(\xp)} \dd \xp 
& = & \int_{\R^{d-1}} (1+\abs{\xp}^2)^{-\frac{s}{2}} (1+\abs{\xp}^2)^{\frac{s}{2}} \abs{\cF(h)(\xp)} \dd \xp \\
& \le & \sqrt{C_s} \norm{(1+\abs{\xp}^2)^{\frac{s}{2}} \cF(h)}_{L^2(\R^{d-1})}.
\eeas
The expected inequality then follows. 
\end{proof}

\section{Elliptic regularity}
\subsection{An elliptic regularity result} 
Consider the BVP:
\bel{r0}
\left\{ \begin{array}{ll} -\Delta u + q u = f & \mbox{in}\ \Om, \\ \pd_\nu u + \al u = 0 & \mbox{on}\ \Gamma. \end{array} \right. 
\ee
\begin{lemma}
\label{lm-er}
Fix $k \in \N_0 := \N \cup \{ 0 \}$ and $r \in \left( \frac{1}{2}, 1\right)$. Let $\Gamma$ be $\cC^{k+1,1}$, let $q \in W^{k,\infty}(\Om)$ and let $\al \in \cC^{k,r}(\Ga)$. Suppose moreover that $q$ and $\al$ satisfy either of the two following conditions 
\bel{c1b}
q \ge 0\; \mbox{in}\; \Om\quad \mbox{and}\quad \al \ge c >0\; \mbox{on}\ \Ga
\ee
or
\bel{c2b}
q \ge c > 0\ \mbox{in}\; \Om\quad \mbox{and}\quad \al \ge 0\; \mbox{on}\; \Ga.
\ee
Then, for all $f \in H^k(\Om)$, the BVP \eqref{r0} admits a unique solution $u \in H^{k+2}(\Om)$.
\end{lemma}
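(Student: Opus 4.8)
The plan is to solve \eqref{r0} first in the weak sense in $H^1(\Om)$ and then to upgrade the regularity of that unique weak solution to $H^{k+2}(\Om)$ by induction on $k$; I expect the only genuinely delicate point to be the presence of the merely H\"older-continuous Robin coefficient $\al$, whose regularity deficit is compensated exactly by the hypothesis $r>1/2$. For the first step I would introduce the bounded sesquilinear form $b(u,v):=\int_\Om(\na u\cdot\na\overline v+qu\overline v)\,dx+\int_\Ga\al u\overline v\,ds$ on $H^1(\Om)$ and verify its $H^1(\Om)$-coercivity: under \eqref{c2b} one has $b(u,u)\ge\min(1,c)\norm{u}_{H^1(\Om)}^2$ directly, while under \eqref{c1b} one has $b(u,u)\ge\norm{\na u}_{L^2(\Om)}^2+c\norm{u}_{L^2(\Ga)}^2$, and the right-hand side controls $\norm{u}_{H^1(\Om)}^2$ through the Poincar\'e--Friedrichs inequality $\norm{u}_{L^2(\Om)}^2\le C(\norm{\na u}_{L^2(\Om)}^2+\norm{u}_{L^2(\Ga)}^2)$, itself a consequence of a Rellich compactness argument since the only $u\in H^1(\Om)$ with $\na u=0$ and $u_{|\Ga}=0$ is $u=0$. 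The Lax--Milgram theorem then yields, for every $f\in L^2(\Om)$ (in particular every $f\in H^k(\Om)$), a unique $u\in H^1(\Om)$ with $b(u,v)=(f,v)_{L^2(\Om)}$ for all $v\in H^1(\Om)$; since any $H^{k+2}(\Om)$-solution of \eqref{r0} is such a weak solution, uniqueness is settled and only the assertion $u\in H^{k+2}(\Om)$ remains.

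For the regularity I would argue by induction on $k\in\N_0$. In the base case $k=0$ (so $\Ga\in\cC^{1,1}$, $q\in L^\infty(\Om)$, $\al\in\cC^{0,r}(\Ga)$) I would rewrite \eqref{r0} as $-\Delta u=g:=f-qu\in L^2(\Om)$ with the inhomogeneous Neumann condition $\partial_\nu u=-\al u_{|\Ga}$ on $\Ga$; since $u\in H^1(\Om)$ gives $u_{|\Ga}\in H^{1/2}(\Ga)$ and multiplication by a $\cC^{0,r}(\Ga)$-function with $r>1/2$ is bounded on $H^{1/2}(\Ga)$, the Neumann datum $\al u_{|\Ga}$ lies in $H^{1/2}(\Ga)$, and the classical $H^2$-regularity theory for second-order elliptic equations on $\cC^{1,1}$-domains with an inhomogeneous Neumann condition (localization and flattening of $\Ga$, tangential difference quotients, and recovery of the purely normal second derivative from the equation) gives $u\in H^2(\Om)$ together with an a priori bound $\norm{u}_{H^2(\Om)}\le C\norm{f}_{L^2(\Om)}$. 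For the inductive step I would assume the statement at order $k-1$ and take $f\in H^k(\Om)$, $q\in W^{k,\infty}(\Om)$, $\al\in\cC^{k,r}(\Ga)$ and $\Ga\in\cC^{k+1,1}$; these data a fortiori meet the hypotheses at order $k-1$, so the inductive hypothesis gives $u\in H^{k+1}(\Om)$, whence $g=f-qu\in H^k(\Om)$ by the algebra property $W^{k,\infty}(\Om)\cdot H^k(\Om)\subset H^k(\Om)$, and $u_{|\Ga}\in H^{k+1/2}(\Ga)$ with $\al u_{|\Ga}\in H^{k+1/2}(\Ga)$ by the boundary multiplier property of $\cC^{k,r}(\Ga)$ on $H^{k+1/2}(\Ga)$ for $r>1/2$. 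Tangentially differentiating the localized, flattened equation up to order $k$, applying the base-case $H^2$-estimate to the resulting difference quotients, and then using the equation itself to trade normal derivatives for tangential ones, would upgrade $u$ to $H^{k+2}(\Om)$ and close the induction.

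I expect the main obstacle to be precisely the boundary multiplier property $\cC^{k,r}(\Ga)\cdot H^{k+1/2}(\Ga)\subset H^{k+1/2}(\Ga)$, which is where the hypothesis $r>1/2$ is genuinely needed: by the Leibniz rule the critical term in $\partial^{k}(\al v)$ is $(\partial^{k}\al)\,v$ with $\partial^{k}\al\in\cC^{0,r}(\Ga)$ and $v\in H^{k+1/2}(\Ga)\subset H^{1/2}(\Ga)$ (every other Leibniz term pairs a Lipschitz factor with an $H^{1/2}(\Ga)$ factor), and estimating the Gagliardo seminorm of a product $\cC^{0,r}\cdot H^{1/2}$ on the $(d-1)$-dimensional manifold $\Ga$ reduces to the finiteness of $\int_{\Ga}|x-y|^{2r-d}\,ds(x)$, which holds exactly when $r>1/2$ (recall $d\ge 3$). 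The remaining ingredients --- Lax--Milgram, the algebra property $W^{k,\infty}\cdot H^k\subset H^k$, and the difference-quotient/flattening machinery for a $\cC^{k+1,1}$-boundary --- are classical and can be quoted from standard references on elliptic boundary value problems (e.g., Agmon--Douglis--Nirenberg, Lions--Magenes, Grisvard, or McLean).
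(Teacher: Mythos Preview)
Your proposal is correct and follows essentially the same route as the paper: induction on $k$, with the inductive step carried out by rewriting \eqref{r0} as the inhomogeneous Neumann problem $-\Delta u=f-qu\in H^k(\Om)$, $\partial_\nu u=-\al u_{|\Ga}\in H^{k+\frac12}(\Ga)$, and invoking standard Neumann regularity; the key boundary multiplier property $\cC^{k,r}(\Ga)\cdot H^{k+\frac12}(\Ga)\subset H^{k+\frac12}(\Ga)$ for $r>\tfrac12$ that you isolate is exactly what the paper quotes from \cite[Theorem 2.3.9]{Ag}. The only cosmetic differences are that the paper handles the base case $k=0$ by citing Grisvard's ready-made $H^2$-result for the Robin problem (\cite[Theorems 2.4.2.6--2.4.2.7]{G}) rather than reducing to Neumann already at that level, and that it quotes \cite[Theorem 2.5.1.1]{G} for the higher-order Neumann regularity where you sketch the tangential-difference-quotient argument.
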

\begin{proof}
The proof is by induction on $k$. First, let $k=0$ and $f \in L^2(\Om)$. Applying either \cite[Theorem 2.4.2.6]{G} or \cite[Theorem 2.4.2.7]{G}, depending on whether $q$ and $\al$ satisfy the condition \eqref{c1b} or \eqref{c2b}, we get a unique solution $u \in H^2(\Om)$ to \eqref{r0}.
Assume that the lemma holds up to $k-1$ with $k\in \mathbb{N}$. If $f\in H^k(\Omega)$, we obtain 
$u \in H^{k+1}(\Om)$, and hence that $u_{| \Ga} \in H^{k+\frac{1}{2}}(\Ga)$. It follows from this and $\al \in \cC^{k,r}(\Ga)$, that $\al u \in H^{k+\frac{1}{2}}(\Ga)$, according to \cite[Theorem 2.3.9]{Ag}. Thus, keeping in mind that $q u \in H^k(\Om)$ since $q \in W^{k,\infty}(\Omega)$, we see from \eqref{r0} that $u$ solves
\bel{r1}
\left\{ \begin{array}{ll} -\Delta u = \tilde{f} & \mbox{in}\ \Om, \\ \pd_\nu u = g & \mbox{on}\ \Gamma, \end{array} \right. 
\ee
where $\tilde{f}:= f - q u \in H^k(\Om)$ and $g:= - \al u \in H^{k+\frac{1}{2}}(\Ga)$. Therefore, we have $u \in H^{k+1}(\Om)$ by 
\cite[Theorem 2.5.1.1]{G}.
\end{proof}

\subsection{Improved regularity of the eigenfunctions}
Let $\phi_n$ denote the $n$-th eigenfunction of the Schr\"odinger-Robin operator $A$. Then $-\Delta \phi_n=\lambda_n \phi_n$ in the distributional sense and $\partial_\nu\phi_n +\alpha\phi_n=0$ in $H^{-1/2}(\Gamma)$.
Suppose that $q$ and $\alpha$ fulfill either \eqref{c1b} or \eqref{c2b}. We get by applying Lemma \ref{lm-er} with $f=-\la_n \phi_n \in H^1(\Om)$ that $\phi_n \in H^3(\Om)$, provided that 
the boundary $\Ga$ is $\cC^{2,1}$, $q \in W^{1,\infty}(\Om)$ and $\al \in \cC^{1,r}(\Ga)$ for some $r \in \left( \frac{1}{2}, 1\right)$. Similarly, using that $\phi_n \in H^2(\Om)$, we get from Lemma \ref{lm-er} where $f=-\la_n \phi_n \in H^2(\Om)$ that $\phi_n \in H^4(\Om)$ whenever 
$\Ga$ is $\cC^{3,1}$, $q \in W^{2,\infty}(\Om)$ and $\al \in \cC^{2,r}(\Ga)$. By iterating, we obtain the following statement.

\begin{lemma}
\label{lm-ire}
For $k \in \N$, let $\Ga$ be of class $\cC^{k+1,1}$, $q \in W^{k,\infty}(\Om)$ and $\al \in \cC^{k,r}(\Ga)$, for some $r \in \left( \frac{1}{2}, 1\right)$. Furthermore, assume that $q$ and $\alpha$ fulfill either \eqref{c1b} or \eqref{c2b}. 
Then, we have $\phi_n \in H^{k+2}(\Om)$.
\end{lemma}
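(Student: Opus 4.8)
The plan is to prove the statement by induction on $k \in \N$, with Lemma \ref{lm-er} providing the elliptic bootstrap at each step. Before starting the induction I would fix two standing facts. First, $\phi_n \in V = H^1(\Om)$ and, by \eqref{i2}, $\phi_n$ is a weak solution of the Robin problem \eqref{r0} with right-hand side $f := \la_n \phi_n$, i.e. $-\Delta \phi_n + q \phi_n = \la_n \phi_n$ in $\Om$ and $\pd_\nu \phi_n + \al \phi_n = 0$ on $\Ga$. Second, under either \eqref{c1b} or \eqref{c2b} the Rayleigh quotient $\ma(u,u)/(u,u)_H$ is positive for every $u \in V \setminus \{0\}$, so $\la_1 > 0$ and $0 \in \rho(A)$; hence \eqref{r0} has a unique weak solution in $V$, and this is what will allow me to identify $\phi_n$ at each step with the more regular solution produced by Lemma \ref{lm-er}.

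For the base case $k = 1$ I would note that $f = \la_n \phi_n \in H^1(\Om)$ since $\phi_n \in H^1(\Om)$, that the hypotheses give $\Ga$ of class $\cC^{2,1}$, $q \in W^{1,\infty}(\Om)$ and $\al \in \cC^{1,r}(\Ga)$ with $r \in (1/2,1)$, and then apply Lemma \ref{lm-er} with index $1$ to conclude $\phi_n \in H^3(\Om) = H^{1+2}(\Om)$.

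For the inductive step I would fix $k \ge 2$, assume the result for $k-1$, and assume the hypotheses of the lemma for the index $k$. Since $\cC^{k+1,1} \subset \cC^{k,1}$, $W^{k,\infty}(\Om) \subset W^{k-1,\infty}(\Om)$ and $\cC^{k,r}(\Ga) \subset \cC^{k-1,r}(\Ga)$, these hypotheses imply the ones for the index $k-1$, so the induction hypothesis yields $\phi_n \in H^{k+1}(\Om)$. Then $f = \la_n \phi_n \in H^{k+1}(\Om) \subset H^k(\Om)$, and one more application of Lemma \ref{lm-er}, now with index $k$, gives $\phi_n \in H^{k+2}(\Om)$, which closes the induction. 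This is precisely the iteration informally described in the paragraph preceding the statement, passing from $\phi_n \in H^3(\Om)$ to $H^4(\Om)$, then to $H^5(\Om)$, and so on.

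I do not expect a genuine obstacle here: the analytic content is entirely contained in Lemma \ref{lm-er}, and the induction only has to verify that the regularity imposed on $\Ga$, $q$ and $\al$ matches, at each index, what Lemma \ref{lm-er} requires, together with the trivial observation that $\la_n \phi_n$ inherits the regularity of $\phi_n$. The one point deserving care --- and the only place where the sign conditions \eqref{c1b}, \eqref{c2b} enter --- is the uniqueness remark that identifies the eigenfunction with the solution output by Lemma \ref{lm-er}.
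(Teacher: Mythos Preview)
Your proof is correct and follows the same inductive bootstrap via Lemma \ref{lm-er} that the paper sketches in the paragraph immediately preceding the statement. Your added remark that the sign conditions \eqref{c1b}--\eqref{c2b} force $0\in\rho(A)$, and hence that the weak solution $\phi_n\in V$ must coincide with the $H^{k+2}$ solution produced by Lemma \ref{lm-er}, makes explicit a point the paper leaves implicit.
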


\end{document}